\newtheorem{theorem}{Theorem}[section]
\newtheorem{definition}[theorem]{Definition}
\newtheorem{lemma}[theorem]{Lemma}
\newtheorem{corollary}[theorem]{Corollary}
\newtheorem{conjecture}[theorem]{Conjecture}
\theoremstyle{definition}
\newtheorem{remark}[theorem]{Remark}
\title{A group ring approach to Fuglede's conjecture in cyclic groups}
\author{Tao Zhang\thanks{Email address:  zhant220@163.com.}\\
\footnotesize  Zhejiang Lab, Hangzhou 311100, China. \\
}
\begin{document}
\date{}

\maketitle

\begin{abstract}
Fuglede's conjecture states that a subset $\Omega\subseteq\mathbb{R}^{n}$ of positive and finite Lebesgue measure is a spectral set if and only if it tiles $\mathbb{R}^{n}$ by translation.  The conjecture does not hold in both directions for $\mathbb{R}^n$, $n\ge3$. However, this conjecture remains open in $\mathbb{R}$ and $\mathbb{R}^2$. Cyclic groups play important roles in the study of Fuglede's conjecture in $\mathbb{R}$. In this paper, we introduce a new tool to study the spectral sets in cyclic groups. In particular, we prove that Fuglede's conjecture holds in $\mathbb{Z}_{p^{n}qr}$.

\medskip

\noindent {{\it Key words and phrases\/}: Fuglede's conjecture, tile, spectral set, group ring.}

\smallskip

\noindent {{\it AMS subject classifications\/}: 05B45, 52C22, 42B05, 43A40.}
\end{abstract}

\section{Introduction}
A bounded measurable subset $\Omega\subseteq\mathbb{R}^{n}$ with $\mu(\Omega)>0$ is called spectral, if there is a subset $\Lambda\subseteq\mathbb{R}^{n}$ such that the set of exponential functions $\{e_{\lambda}(x)\}_{\lambda\in\Lambda}$ is a complete orthogonal basis, where $e_{\lambda}(x)=e^{2\pi i\langle\lambda,x\rangle}$. In this case, $\Lambda$ is called the spectrum of $\Omega$, and $(\Omega,\Lambda)$ is called a spectral pair in $\mathbb{R}^n$.

A subset $A\subseteq\mathbb{R}^{n}$ tiles $\mathbb{R}^{n}$ by translation, if there is a set $T\subseteq\mathbb{R}^{n}$ such that almost all elements of $\mathbb{R}^{n}$ can be uniquely written as a sum $a+t$, where $a\in A$, $t\in T$. We will denote this by $A\oplus T=\mathbb{R}^{n}$. $T$ is called the tiling complement of $A$, and $(A,T)$ is called a tiling pair in $\mathbb{R}^{n}$.

In 1974, Fuglede \cite{F74} proposed the following conjecture, which connected these two notions.
\begin{conjecture}
A subset $\Omega\subseteq\mathbb{R}^{n}$ of positive and finite Lebesgue measure is a spectral set if and only if it tiles $\mathbb{R}^{n}$ by translation.
\end{conjecture}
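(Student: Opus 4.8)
The plan is to route both halves of the biconditional through the Fourier transform $\widehat{\mathbf 1}_\Omega(\xi)=\int_\Omega e^{-2\pi i\langle\xi,x\rangle}\,dx$, which converts ``spectral'' and ``tiles'' into parallel statements about its zero set $\mathcal Z=\{\xi:\widehat{\mathbf 1}_\Omega(\xi)=0\}$. A set $\Lambda$ is a spectrum exactly when the exponentials are pairwise orthogonal, i.e.\ $\lambda-\lambda'\in\mathcal Z$ for all distinct $\lambda,\lambda'\in\Lambda$, and complete, which by Parseval is the identity $\sum_{\lambda\in\Lambda}|\widehat{\mathbf 1}_\Omega(\xi-\lambda)|^2=\mu(\Omega)^2$ for a.e.\ $\xi$. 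Tiling $\Omega\oplus T=\mathbb R^n$ is the statement $\sum_{t\in T}\mathbf 1_\Omega(x-t)=1$ a.e.; taking Fourier transforms forces $\widehat{\mathbf 1}_\Omega$ to vanish on the nonzero frequencies supported by $T$ together with the density constraint $\mu(\Omega)\,\mathrm{dens}(T)=1$. The first step is to record these two reformulations so that orthogonality, completeness, packing, and covering all become conditions on $\mathcal Z$.

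For tiling $\Rightarrow$ spectral I would dispatch the structured case first: when $T=L$ is a lattice, the density constraint gives $\mu(\Omega)=\mathrm{covol}(L)$ and the covering condition forces $\widehat{\mathbf 1}_\Omega$ to vanish on $L^\ast\setminus\{0\}$, so the dual lattice $L^\ast$ is an exact spectrum --- Fuglede's original observation. The reverse implication spectral $\Rightarrow$ tiling runs symmetrically: from the completeness identity one extracts a packing relation whose integration yields a translation set $T$ with $\Omega\oplus T=\mathbb R^n$. What remains after the lattice cases is the genuinely arithmetic core, where $\Omega$ and its partners are aperiodic.

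The decisive reduction, and the point at which cyclic groups enter, is that in one dimension the standard rationality and periodicity normalizations let one take $\Omega$ to be a union of unit intervals indexed by a subset $A\subseteq\mathbb Z_N$; both ``$\Omega$ spectral'' and ``$\Omega$ tiles'' then descend to the finite assertions ``$A$ is spectral in $\mathbb Z_N$'' and ``$A$ tiles $\mathbb Z_N$.'' The plan is therefore to settle the discrete Fuglede problem in $\mathbb Z_N$ and lift it back. Here the group-ring viewpoint of this paper replaces the analytic zero set $\mathcal Z$ by vanishing of the mask polynomial $A(X)=\sum_{a\in A}X^a$ at prescribed roots of unity, turning orthogonality and completeness into divisibility and cyclotomic conditions on $A(X)$ inside $\mathbb Z[X]/(X^N-1)$.

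The hard part will be the discrete problem for general $N$. The cyclotomic conditions characterizing tiles --- essentially the Coven--Meyerowitz properties $(T1)$ and $(T2)$ --- and those forced by the existence of a spectrum need not visibly coincide once $N$ carries several prime factors, and it is the interaction of those prime factors that must be controlled. Pinning down this interaction through explicit group-ring identities is exactly where all the difficulty concentrates, and it is the step the $N=p^nqr$ analysis of this paper is designed to carry out.
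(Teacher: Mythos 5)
There is a fundamental problem here: the statement you are trying to prove is a \emph{conjecture}, not a theorem of the paper, and it is in fact known to be \textbf{false} for $n\ge 3$. The paper itself records this: Tao constructed a spectral set in $\mathbb{R}^5$ that does not tile, and subsequent work brought counterexamples to both directions down to dimension $3$; the conjecture remains open only in $\mathbb{R}$ and $\mathbb{R}^2$. The paper therefore offers no proof of this statement and proves instead the much more limited Theorem~1.2, namely Fuglede's conjecture for the cyclic groups $\mathbb{Z}_{p^nqr}$. So no proof proposal for the full conjecture can be correct, and yours breaks down exactly at the step you wave through ``symmetrically'': you claim that from the completeness identity $\sum_{\lambda\in\Lambda}|\widehat{\mathbf 1}_\Omega(\xi-\lambda)|^2=\mu(\Omega)^2$ one ``extracts a packing relation whose integration yields a translation set $T$ with $\Omega\oplus T=\mathbb{R}^n$.'' That is precisely the implication Tao's example refutes: a spectrum gives a tiling of frequency space by the function $|\widehat{\mathbf 1}_\Omega|^2$, but this does not in general produce a translational tiling of $\mathbb{R}^n$ by $\Omega$ itself. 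Any argument at the level of generality you propose must fail, because it would apply verbatim in $\mathbb{R}^5$.

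Two further gaps would remain even if you restricted to $n=1$. First, your discretization step (reducing $\Omega$ to a union of unit intervals indexed by $A\subseteq\mathbb{Z}_N$) is not a ``standard normalization'' valid for arbitrary bounded measurable $\Omega$; rationality/periodicity of spectra and tiling complements in $\mathbb{R}$ is itself a serious open issue. Second, the known reductions recorded in the paper's introduction are one-directional for the spectral-to-tile implication: $S\text{-}T(\mathbb{R})\Rightarrow S\text{-}T(\mathbb{Z}_N)$ for all $N$, with only $T\text{-}S$ being an equivalence. So even a complete solution of the discrete problem for every $\mathbb{Z}_N$ --- which is itself open, the paper handling only $N=p^nqr$ --- would not, by the implications you invoke, lift back to $\mathbb{R}$ in the direction your plan needs. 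Your description of the lattice case and of the group-ring/mask-polynomial dictionary is accurate as far as it goes, but the proposal as a whole attempts something known to be impossible in general and unavailable by these reductions even in dimension one.
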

In the same paper, Fuglede proved this conjecture when the tiling complement or the spectrum is a lattice in $\mathbb{R}^n$. 30 years later, Tao \cite{T04} disproved this conjecture by constructing a non-tile spectral set in $\mathbb{R}^5$. Currently, the conjecture does not hold in both directions for $\mathbb{R}^n$, $n\ge3$ \cite{FMM06,KM2006,KM06,M05}. However, this conjecture remains open in $\mathbb{R}$ and $\mathbb{R}^2$.

Since original Fuglede's conjecture falses for $\mathbb{R}^n$, $n\ge3$, then researchers considered this problem from two persepctives. One is under additional assumptions. In 2003, Iosevich, Katz and Tao \cite{IKT03} showed that Fuglede's conjecture holds for convex sets in $\mathbb{R}^{2}$. Later, a similar result in dimension 3 was proved by Greenfeld and Lev \cite{GL17}. Recently, Lev and Matolcsi \cite{LM21} proved that Fuglede's conjecture holds for convex domains in $\mathbb{R}^{n}$ for all $n$. Another is trying to find for which group $G$, Fuglede's conjecture holds in $G$. In \cite{FFLS19,FFS16}, Fan et al. proved that Fuglede's conjecture holds in $\mathbb{Q}_{p}$, the field of $p$-adic numbers. We also know that Fuglede's conjecture holds in the following finite Abelian groups: $\mathbb{Z}_{p}^{d}$ ($p=2$ and $d\le6$; $p$ is an odd prime and $d=2$; $p=3,5,7$ and $d=3$) \cite{AABF17,FMV2022,FS20,IMP17}, $\mathbb{Z}_{p}\times\mathbb{Z}_{p^{n}}$ \cite{IMP17,S20,Z2022}, $\mathbb{Z}_{p}\times\mathbb{Z}_{pq}$ \cite{KS2021}  and $\mathbb{Z}_{pq}\times\mathbb{Z}_{pq}$ \cite{FKS2012}, $\mathbb{Z}_{p^{n}}$ \cite{L02}, $\mathbb{Z}_{p^{n}q^{m}}$ ($p<q$ and $m\le9$ or $n\le6$; $p^{m-2}<q^{4}$) \cite{KMSV20,M21,MK17}, $\mathbb{Z}_{pqr}$ \cite{S19}, $\mathbb{Z}_{p^{2}qr}$ \cite{Somlai21} and $\mathbb{Z}_{pqrs}$ \cite{KMSV2012}, where $p,q,r,s$ are distinct primes.

In this paper, we focus on finite cyclic groups. Following the notations from \cite{DL2014}, write $S-T(G)$ (respectively, $T-S(G)$), if the ``Spectral $\Rightarrow$ Tile'' (respectively, ``Tile $\Rightarrow$ Spectral'') direction of Fuglede's conjecture holds in $G$. Then we have the following relations \cite{DL2013,DL2014}:
\[T-S(\mathbb{R})\Leftrightarrow T-S(\mathbb{Z})\Leftrightarrow T-S(\mathbb{Z}_{N})\text{ for all }N,\]
and
\[S-T(\mathbb{R})\Rightarrow S-T(\mathbb{Z})\Rightarrow S-T(\mathbb{Z}_{N})\text{ for all }N.\]
The above relations show that finite cyclic groups play important roles in the study of Fuglede's conjecture in $\mathbb{R}$.
As we have seen, Fuglede's conjecture holds in the following finite cyclic groups: $\mathbb{Z}_{p^{n}}$ \cite{L02}, $\mathbb{Z}_{p^{n}q^{m}}$ ($p<q$ and $m\le9$ or $n\le6$; $p^{m-2}<q^{4}$) \cite{KMSV20,M21,MK17}, $\mathbb{Z}_{pqr}$ \cite{S19}, $\mathbb{Z}_{p^{2}qr}$ \cite{Somlai21} and $\mathbb{Z}_{pqrs}$ \cite{KMSV2012}, where $p,q,r,s$ are distinct primes. For the direction ``Tile $\Rightarrow$ Spectral'', \L aba \cite{L02} proved $T-S(\mathbb{Z}_{p^nq^m})$ for distinct primes $p,q$. Later, \L aba and Meyerowitz proved $T-S(\mathbb{Z}_{n})$ in comments of Tao's blog \cite{T2011} (see also \cite{S19}), where $n$ is a squarefree integer. Recently, Malikiosis \cite{M21} proved $T-S(\mathbb{Z}_{p_{1}^{n}p_2\cdots p_k})$, where $p_1,p_2,\dots,p_k$ are distinct primes. In \cite{LL22,LL2022}, the authors developed some new tools to study tiling sets in cyclic groups and proved $T-S(\mathbb{Z}_{p^2q^2r^2})$, where $p,q,r$ are distinct primes.

Now we state our main result.
\begin{theorem}\label{mainthm}
Let $p,q,r$ be distinct primes and $n$ be a positive integer. A subset in $\mathbb{Z}_{p^{n}qr}$ is a spectral set if and only if it is a tile of $\mathbb{Z}_{p^{n}qr}$.
\end{theorem}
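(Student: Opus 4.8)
The plan is to treat the two implications separately and to concentrate all the real work on one of them. The \emph{tile $\Rightarrow$ spectral} direction requires no new argument: $p^{n}qr$ has the shape $p_{1}^{n}p_{2}p_{3}$ with $p_{1},p_{2},p_{3}$ distinct primes, so Malikiosis' theorem that $T$-$S(\mathbb{Z}_{p_{1}^{n}p_{2}\cdots p_{k}})$ holds \cite{M21} applies directly and supplies a spectrum for any tile. Thus the theorem reduces to proving $S$-$T(\mathbb{Z}_{p^{n}qr})$: every spectral subset $A\subseteq\mathbb{Z}_{p^{n}qr}$ must tile.

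For that direction I would first translate the analytic hypothesis into arithmetic of the mask polynomial. Put $N=p^{n}qr$, let $A(X)=\sum_{a\in A}X^{a}$, and recall that $(A,\Lambda)$ is a spectral pair exactly when $|\Lambda|=|A|$ and $A(\zeta_{N}^{\lambda-\lambda'})=0$ for all distinct $\lambda,\lambda'\in\Lambda$, where $\zeta_{N}=e^{2\pi i/N}$. Passing to the group ring $\mathbb{Z}[\mathbb{Z}_{N}]$, vanishing of $A$ at a root of order $d$ is the divisibility $\Phi_{d}\mid A(X)$, so the governing invariant is the divisor set $D_{A}=\{d\mid N:\Phi_{d}\mid A(X)\}$ sitting inside the divisor lattice of $N$. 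In this language both ``spectral'' and ``tiling'' become covering statements for $D_{A}$, and the concrete target is to show that a spectral $A$ satisfies the Coven--Meyerowitz conditions (T1) and (T2); once these hold, \L aba's theorem \cite{L02} produces a tiling complement.

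The core is a structural analysis organized by the prime content $|A|=p^{\alpha}q^{\beta}r^{\gamma}$ with $0\le\alpha\le n$ and $\beta,\gamma\in\{0,1\}$. A Parseval/counting argument for the spectrum should first yield the (T1)-type fact that every prime-power divisor forced by $|A|$ already lies in $D_{A}$. When at most one of $q,r$ divides $|A|$ (the cases $\beta\gamma=0$), standard subgroup and quotient reductions send the problem back to the $p$-group picture resolved by \L aba in $\mathbb{Z}_{p^{n}}$ \cite{L02} and to the $\mathbb{Z}_{p^{n}q^{m}}$ theory, so these cases carry no new difficulty. The genuinely new situation is $\beta=\gamma=1$, i.e.\ $|A|=p^{\alpha}qr$, where all three primes interact; here I would use the group-ring bookkeeping to follow how the differences $\Lambda-\Lambda$ distribute across the mixed levels $p^{a}q$, $p^{a}r$, and $p^{a}qr$ and thereby force the composite cyclotomic factors demanded by (T2) simultaneously for every admissible $a$.

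The main obstacle is precisely this uniformity over the whole $p$-adic tower $0\le a\le n$. For $\mathbb{Z}_{p^{2}qr}$ the corresponding step involves only the three levels $a\in\{0,1,2\}$ and can be finished by finite case-checking \cite{Somlai21}; for general $n$ I expect to need either an induction on $n$ that peels off a single $p$-layer (passing to an order-$p$ subgroup or its quotient while preserving spectrality) or one group-ring identity that delivers $\Phi_{p^{a}qr}\mid A(X)$ for all relevant $a$ at once. The delicate point is excluding ``rogue'' spectra at the mixed $p^{a}qr$ levels---orthogonal families of the correct size whose difference set meets the zero set of $A$ but which admit no tiling complement---and arranging the inductive step so that it respects the $q$- and $r$-structure at the same $p$-level simultaneously; securing that compatibility is where I anticipate the proof to be hardest.
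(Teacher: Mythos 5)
Your reduction of the theorem to the direction $S$-$T(\mathbb{Z}_{p^{n}qr})$ via Malikiosis' result is exactly what the paper does, and your translation of spectrality into vanishing of the mask polynomial at roots of unity is the standard correct setup. However, from that point on your proposal is a research plan rather than a proof: the entire substance of the theorem is contained in the step you defer. You write that for $|A|=p^{\alpha}qr$ you ``would use the group-ring bookkeeping to follow how the differences $\Lambda-\Lambda$ distribute'' and that you ``expect to need either an induction on $n$ \dots or one group-ring identity,'' explicitly flagging the uniformity over the tower $0\le a\le n$ and the exclusion of rogue spectra as open. Those are precisely the points the paper spends Sections 3 and 4 resolving, so as written there is a genuine gap: no argument is given for the only case that is not already covered by prior work. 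Two smaller unproved assertions compound this: you organize the argument by writing $|A|=p^{\alpha}q^{\beta}r^{\gamma}$, but divisibility of $|A|$ by the relevant primes is not free for spectral sets (in the paper it is \emph{derived} from membership of $p^{t}qr$, $p^{n}q$, $p^{n}r$ in the zero sets, via a pigeonhole count on the fibers); and the claim that the cases $\beta\gamma=0$ reduce to subgroups or quotients ``with no new difficulty'' needs justification, since $r\nmid|A|$ does not by itself place $A$ in a coset of a proper subgroup or make its spectrum degenerate.

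It is also worth noting that your intended route differs from the paper's in a way that likely makes it harder, not easier. You aim to verify the Coven--Meyerowitz conditions (T1) and (T2) for $A$ alone and then invoke the tiling criterion; the paper never establishes (T2). Instead it exploits the symmetry of the spectral pair $(A,B)$, decomposing \emph{both} sets over the $q$- and $r$-coordinates as $\sum_{j,k}A_{jk}b^{j}c^{k}$, proving a chain of lemmas that play $\mathcal{Z}_{A}$ and $\mathcal{Z}_{B}$ off against each other (e.g.\ $p^{n}r\in\mathcal{Z}_{A}$ or $p^{n}r\in\mathcal{Z}_{B}$), and in the decisive case writing down an explicit tiling complement $T=\{a^{\sum_{i\notin J_{1}}x_{i}p^{i}}\}$ supported on the $p$-part, verified directly by $|A|\cdot|T|=N$ and $(A-A)\cap(T-T)=\{0\}$. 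If you pursue your plan, the interaction between the zero set of $A$ and the zero set of its spectrum is the missing ingredient you should build in; working with $D_{A}$ alone discards exactly the information the paper uses to close the hard case.
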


Note that the ``Tile $\Rightarrow$ Spectral'' direction follows from \cite{M21}. Hence, we only need to prove the ``Spectral $\Rightarrow$ Tile'' direction.
When we consider Fuglede's conjecture in cyclic groups, one of the most important tools is the so-called (T1) and (T2) conditions, which was introduced by Coven and Meyerowitz \cite{CM99}. In this paper, we introduce a new tool ``group ring'' to study spectral sets in cyclic groups. In particular, we prove that Fuglede's conjecture holds in $\mathbb{Z}_{p^{n}qr}$. This paper is organized as follows. In Section~\ref{section2}, we recall some basics of spectral sets and tiles in cyclic groups. In Section~\ref{section3}, we prove some useful lemmas by group ring. In Section~\ref{section4}, we prove Theorem~\ref{mainthm}.

\section{Preliminaries}\label{section2}
Let $\mathbb{Z}_{n}$ be a finite cyclic group with order $n$ (written additively).
For any $a\in\mathbb{Z}_{n}$, define
\[\chi_{a}(b)=e^{\frac{2\pi i\cdot ab}{n}},\]
and $\chi_{a}\chi_{b}=\chi_{a+b}$. Then the set $\widehat{\mathbb{Z}_{n}}=\{\chi_{a}:\ a\in\mathbb{Z}_{n}\}$ forms a group which is isomorphic to $\mathbb{Z}_{n}$.

Now we restate the definition of spectral sets and tiles in cyclic groups.
\begin{definition}
A subset $A\subseteq \mathbb{Z}_{N}$ is said to be spectral if there is a subset $B\subseteq \mathbb{Z}_{N}$ such that
\[\{\chi_{b}: b\in B\}\]
forms an orthogonal basis in $L^{2}(A)$, the vector space of complex valued functions on $A$ with Hermitian inner product $\langle f,g\rangle=\sum_{a\in A}f(a)\overline{g}(a)$. In such a case, the set $B$ is called a spectrum of $A$, and $(A,B)$ is called a spectral pair.
\end{definition}
Since the dimension of $L^{2}(A)$ is $|A|$, the pair $(A,B)$ being a spectral pair is equivalent to
\[|A|=|B|\textup{ and }\sum_{a\in A}\chi_{b-b'}(a)=0\textup{ for all }b\ne b'\in B.\]
The set of zeros of $A$ is defined by
\[\mathcal{Z}_{A}=\{b\in \mathbb{Z}_{n}: \sum_{a\in A}\chi_{b}(a)=0\}.\]

The following equivalent conditions of a spectral pair can be found in \cite{S20,Z2022}.
\begin{lemma}\label{lem3p1}
Let $A,B\subseteq \mathbb{Z}_{N}$. Then the following statements are equivalent.
\begin{enumerate}
  \item[(a)] $(A,B)$ is a spectral pair.
  \item[(b)] $(B,A)$ is a spectral pair.
  \item[(c)] $|A|=|B|$ and $(B-B)\backslash\{0\}\subseteq\mathcal{Z}_{A}$.
  \item[(d)] The pair $(aA+g,bB+h)$ is a spectral pair for all $a,b\in\mathbb{Z}_{N}^{*}$ and $g,h\in \mathbb{Z}_{N}$.
\end{enumerate}
\end{lemma}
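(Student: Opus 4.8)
The plan is to prove the four-way equivalence by isolating the three implications (a)$\Leftrightarrow$(c), (a)$\Leftrightarrow$(b) and (a)$\Leftrightarrow$(d), since each rests on a different elementary mechanism. The equivalence (a)$\Leftrightarrow$(c) is essentially a reformulation of the definition: given $|A|=|B|$, the orthogonality condition $\sum_{a\in A}\chi_{b-b'}(a)=0$ for all $b\ne b'\in B$ says exactly that every nonzero difference $b-b'$ lies in $\mathcal{Z}_A$, and as $b,b'$ range over $B$ these differences fill out precisely $(B-B)\setminus\{0\}$. So I would dispose of this direction by simply unwinding notation.

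For (a)$\Leftrightarrow$(b) I would pass to the character matrix. Form the $|B|\times|A|$ matrix $M=(\chi_b(a))_{b\in B,\,a\in A}$; the inner product of rows $b$ and $b'$ equals $\sum_{a\in A}\chi_{b-b'}(a)$, so statement (a) is equivalent to $MM^{*}=|A|\,I$. Because $|A|=|B|$ the matrix $M$ is square, hence $M/\sqrt{|A|}$ is unitary and therefore also satisfies $M^{*}M=|A|\,I$. Computing the $(a,a')$ entry of $M^{*}M$ as $\sum_{b\in B}\chi_{a'-a}(b)$ identifies this with the orthogonality condition defining the pair $(B,A)$. Thus the symmetry (a)$\Leftrightarrow$(b) reduces to the linear-algebra fact that a scalar multiple of a square matrix is unitary if and only if its adjoint is.

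Finally I turn to (a)$\Leftrightarrow$(d). The implication (d)$\Rightarrow$(a) is immediate on taking $a=b=1$ and $g=h=0$. For (a)$\Rightarrow$(d) I will work through the reformulation (c), tracking how $\mathcal{Z}_A$ and the relevant difference sets transform. Writing $A(x)=\sum_{a\in A}x^{a}$ one has $\sum_{a\in A}\chi_b(a)=A(\omega^{b})$ with $\omega=e^{2\pi i/N}$, from which two facts drop out: translating $A$ by $g$ multiplies each character sum by the nonzero unit $\chi_b(g)$ and so leaves $\mathcal{Z}_A$ unchanged, while dilating by $a\in\mathbb{Z}_N^{*}$ gives $\mathcal{Z}_{aA}=a^{-1}\mathcal{Z}_A$; on the $B$-side one has $(bB+h)-(bB+h)=b(B-B)$. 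The decisive ingredient is that $\mathcal{Z}_A$ is invariant under multiplication by units of $\mathbb{Z}_N^{*}$: if $b\in\mathcal{Z}_A$ then $\omega^{b}$ is a primitive $d$-th root of unity for $d=N/\gcd(b,N)$, so the integer polynomial $A(x)$ is divisible by the cyclotomic polynomial $\Phi_d$ and hence vanishes at every primitive $d$-th root of unity, in particular at $\omega^{tb}$ whenever $\gcd(t,N)=1$. Granting this, condition (c) for the pair $(aA+g,\,bB+h)$ reads $ab\,(B-B)\setminus\{0\}\subseteq\mathcal{Z}_A$, which follows from $(B-B)\setminus\{0\}\subseteq\mathcal{Z}_A$ because $ab\in\mathbb{Z}_N^{*}$.

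The main obstacle is precisely this last Galois-theoretic closure property of $\mathcal{Z}_A$ under the unit group; the remaining content is bookkeeping together with the square-matrix argument. I expect the unit-invariance to be the one step where the integrality of the coefficients of $A(x)$, and hence the cyclotomic factorization, is genuinely used.
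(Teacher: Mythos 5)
Your proof is correct; the paper itself does not prove this lemma but only cites it to \cite{S20,Z2022}, and your argument --- unwinding the definition for (a)$\Leftrightarrow$(c), the square character matrix for (a)$\Leftrightarrow$(b), and invariance of $\mathcal{Z}_{A}$ under $\mathbb{Z}_{N}^{*}$ via the cyclotomic-polynomial (Galois conjugation) step for (a)$\Leftrightarrow$(d) --- is exactly the standard one found there. No gaps.
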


\begin{definition}
A subset $A\subseteq \mathbb{Z}_{N}$ is said to be a tile if there is a subset $T\subseteq \mathbb{Z}_{N}$ such that each element $g\in \mathbb{Z}_{N}$ can be expressed uniquely in the form
\[g=a+t,\ a\in A,\ t\in T.\]
We will denote this by $\mathbb{Z}_{N}=A\oplus T$.
The set $T$ is called a tiling complement of $A$, and $(A,T)$ is called a tiling pair.
\end{definition}

We have the following equivalent conditions for a tiling pair \cite{S20}, \cite[Lemma 2.1]{SS09}.
\begin{lemma}\label{lem3p2}
Let $A,T$ be subsets in $\mathbb{Z}_{N}$. Then the following statements are equivalent.
\begin{enumerate}
  \item[(a)] $(A,T)$ is a tiling pair.
  \item[(b)] $(T,A)$ is a tiling pair.
  \item[(c)] $(A+g,T+h)$ is a tiling pair.
  \item[(d)] $|A|\cdot|T|=N$ and $(A-A)\cap(T-T)=\{0\}$.
  \item[(e)] $|A|\cdot|T|=N$ and $\mathcal{Z}_{A}\cup\mathcal{Z}_{T}=\mathbb{Z}_{N}\backslash\{0\}$.
\end{enumerate}
\end{lemma}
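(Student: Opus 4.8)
The plan is to route every equivalence through a single reformulation in the group ring $\mathbb{C}[\mathbb{Z}_N]$, which also anticipates the group-ring tools to come. I identify a subset $A\subseteq\mathbb{Z}_N$ with the element $\sum_{a\in A}x^{a}\in\mathbb{C}[\mathbb{Z}_N]$, and I write $\mathbb{Z}_N$ for the element $\sum_{g\in\mathbb{Z}_N}x^{g}$ as well. The coefficient of $x^{g}$ in the product $A\cdot T$ counts the number of ways of writing $g=a+t$ with $a\in A$, $t\in T$; hence the defining property of a tiling pair is exactly the group-ring identity $A\cdot T=\mathbb{Z}_N$. Everything below reduces to analysing when this identity holds.

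Given this reformulation, statements (a), (b), (c) are equivalent by formal manipulation. Commutativity of $\mathbb{C}[\mathbb{Z}_N]$ gives $A\cdot T=T\cdot A$, so (a) $\Leftrightarrow$ (b); and since $x^{g+h}\cdot\mathbb{Z}_N=\mathbb{Z}_N$, multiplying $A\cdot T=\mathbb{Z}_N$ by the unit $x^{g+h}$ yields $(A+g)\cdot(T+h)=\mathbb{Z}_N$, giving (a) $\Leftrightarrow$ (c). For (a) $\Leftrightarrow$ (d) I would argue combinatorially: a tiling pair is the same as requiring the $|A|\cdot|T|$ sums $a+t$ to be pairwise distinct and to exhaust $\mathbb{Z}_N$. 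Pairwise distinctness, $a+t=a'+t'\Rightarrow(a,t)=(a',t')$, is equivalent to $(A-A)\cap(T-T)=\{0\}$, since a nonzero common difference $a-a'=t'-t$ is exactly a collision; and once the sums are distinct, exhaustion of $\mathbb{Z}_N$ is equivalent to the cardinality count $|A|\cdot|T|=N$ by pigeonhole. This is precisely (d).

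The substantive step is (a) $\Leftrightarrow$ (e), which I would obtain by applying characters. Each $\chi_b$ extends to a ring homomorphism $\mathbb{C}[\mathbb{Z}_N]\to\mathbb{C}$ with $\chi_b(A)=\sum_{a\in A}\chi_b(a)$, so evaluating $A\cdot T=\mathbb{Z}_N$ at $\chi_b$ gives $\bigl(\sum_{a\in A}\chi_b(a)\bigr)\bigl(\sum_{t\in T}\chi_b(t)\bigr)=\sum_{g\in\mathbb{Z}_N}\chi_b(g)$, whose right-hand side is $N$ for $b=0$ and $0$ for $b\ne0$. Because the map $f\mapsto(\chi_b(f))_{b\in\mathbb{Z}_N}$ is injective on $\mathbb{C}[\mathbb{Z}_N]$, the identity $A\cdot T=\mathbb{Z}_N$ holds if and only if all of these scalar equations hold. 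The $b=0$ equation is $|A|\cdot|T|=N$, while for $b\ne0$ the equation $\chi_b(A)\chi_b(T)=0$ says $b\in\mathcal{Z}_A$ or $b\in\mathcal{Z}_T$; as $0\notin\mathcal{Z}_A\cup\mathcal{Z}_T$ (since $\chi_0(A)=|A|>0$ and $\chi_0(T)=|T|>0$), this is exactly $\mathcal{Z}_A\cup\mathcal{Z}_T=\mathbb{Z}_N\backslash\{0\}$, which is (e).

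The only place needing genuine care is the backward direction of this last step: passing from the local vanishing conditions back to the global tiling identity. Reading the character equations naively only factorizes each of them, so I must invoke Fourier injectivity — that a group-ring element is determined by its $N$ character values — to conclude $A\cdot T-\mathbb{Z}_N=0$. Concretely, $\widehat{\mathbb{Z}_N}=\{\chi_b\}_{b\in\mathbb{Z}_N}$ is a basis of the dual space of $\mathbb{C}[\mathbb{Z}_N]$, so the $N$ scalar equations pin down the $N$ coefficients of $A\cdot T-\mathbb{Z}_N$. Once this is in place the remaining implications are formal, and combining them closes the equivalence.
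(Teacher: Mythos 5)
Your proof is correct and complete: the group-ring identity $A\cdot T=\mathbb{Z}_N$ correctly captures the tiling condition, the combinatorial step for (d) and the Fourier-injectivity step for (e) are both sound. The paper itself gives no proof of this lemma --- it is quoted from \cite{S20} and \cite[Lemma 2.1]{SS09} --- and your character-sum argument is essentially the standard one found in those references, so there is nothing to reconcile.
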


If $|A|=1$ or $A=\mathbb{Z}_{N}$, then the set $A$ is called trivial. It is easy to see that a trivial set is a spectral set and also a tiling set. In the following of this paper, we will only consider nontrivial sets.
We also need the following lemmas, which will be useful in the following sections.
\begin{lemma}\label{lem3p3}\cite{KMSV20}
Let $A$ be a spectral set in $\mathbb{Z}_{N}$, that does not generate $\mathbb{Z}_{N}$. Assume that for every proper subgroup $H$ of $\mathbb{Z}_{N}$ we have $S-T(H)$. Then $A$ tiles $\mathbb{Z}_{N}$.
\end{lemma}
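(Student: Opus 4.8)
The plan is to push the spectral set $A$ down into a proper subgroup, apply the hypothesis $S-T(H)$ there, and then lift the resulting tiling back up to $\mathbb{Z}_N$. Since both the spectral property and the tiling property are invariant under translation (Lemma~\ref{lem3p1}(d) with $a=b=1$, and Lemma~\ref{lem3p2}(c)), I would first translate $A$ so that $0\in A$. Writing $H=\langle A\rangle$ for the subgroup of $\mathbb{Z}_N$ generated by $A$, the assumption that $A$ does not generate $\mathbb{Z}_N$ means precisely that $H$ is a proper subgroup, and of course $A\subseteq H$.

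The core of the argument is to show that $A$, regarded as a subset of $H$, is spectral in $H$. Let $B$ be a spectrum of $A$ in $\mathbb{Z}_N$, so by Lemma~\ref{lem3p1}(c) we have $|A|=|B|$ and $(B-B)\setminus\{0\}\subseteq\mathcal{Z}_A$. Let $H^{\perp}=\{b\in\mathbb{Z}_N:\chi_b(a)=1\text{ for all }a\in H\}$ be the annihilator of $H$. The key observation is that, because $A\subseteq H$, the character sum $\sum_{a\in A}\chi_b(a)$ depends only on the coset $b+H^{\perp}$; in particular $\mathcal{Z}_A$ is a union of cosets of $H^{\perp}$. Moreover, no two distinct elements of $B$ can lie in the same coset of $H^{\perp}$: if $b-b'\in H^{\perp}\setminus\{0\}$ then $\chi_{b-b'}(a)=1$ for every $a\in A$, giving $\sum_{a\in A}\chi_{b-b'}(a)=|A|\neq0$, which contradicts $b-b'\in\mathcal{Z}_A$. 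Hence the restrictions $\{\chi_b|_H:b\in B\}$ are $|A|$ distinct characters of $H$, and for $b\neq b'$ in $B$ the relation $\sum_{a\in A}\chi_{b-b'}(a)=0$ says exactly that these restricted characters are orthogonal over $A$. Identifying $\widehat{H}$ with $H$ through the surjective restriction map $\widehat{\mathbb{Z}_N}\to\widehat{H}$ (whose kernel is $H^{\perp}$), this produces a set $B'\subseteq H$ with $|B'|=|A|$ that is a spectrum of $A$ inside $H$; that is, $A$ is a spectral set in $H$.

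Since $H$ is a proper subgroup, the hypothesis $S-T(H)$ applies, so $A$ tiles $H$: there is $T\subseteq H$ with $A\oplus T=H$. It remains to lift this tiling to $\mathbb{Z}_N$. Choosing a complete set $R$ of coset representatives for $H$ in $\mathbb{Z}_N$, every $g\in\mathbb{Z}_N$ is uniquely $h+r$ with $h\in H$ and $r\in R$, and writing $h=a+t$ uniquely with $a\in A$, $t\in T$ shows that the map $(a,t,r)\mapsto a+t+r$ is a bijection from $A\times T\times R$ onto $\mathbb{Z}_N$. A quick count gives $|A|\,|T|\,|R|=|H|\cdot(N/|H|)=N$, so $A\oplus(T+R)=\mathbb{Z}_N$, and $A$ tiles $\mathbb{Z}_N$.

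I expect the descent step to be the main obstacle: one must verify that a spectrum living in the ambient group $\mathbb{Z}_N$ genuinely restricts to a spectrum inside the subgroup $H$, and the decisive point that makes this work is that the orthogonality forces the spectrum elements into distinct cosets of the annihilator $H^{\perp}$. By comparison, the reductions via translation and the final lifting are routine.
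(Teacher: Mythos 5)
The paper offers no proof of this lemma---it is quoted directly from \cite{KMSV20}---but your argument is correct and is essentially the standard one: descend the spectrum to the proper subgroup $H=\langle A\rangle$ using the key observation that orthogonality forces distinct spectrum elements into distinct cosets of the annihilator $H^{\perp}$, apply $S-T(H)$, and lift the resulting tiling of $H$ to $\mathbb{Z}_{N}$ by adjoining a set of coset representatives. All steps check out, including the translation-invariance reductions and the verification that $T+R$ is a genuine direct-sum complement.
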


\begin{lemma}\label{lem3p4}\cite{KMSV20}
Let $N$ be a natural number and suppose that $S-T(\mathbb{Z}_{N}/H)$ holds for every $\{0\}\ne H\leq \mathbb{Z}_{N}$. Assume that $(A,B)$ is a spectral pair and $B$ does not generate $\mathbb{Z}_{N}$. Then $A$ tiles $\mathbb{Z}_{N}$.
\end{lemma}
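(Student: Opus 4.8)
The plan is to reduce the spectral pair $(A,B)$ to a spectral pair in a proper quotient $\mathbb{Z}_{N}/H$, apply the hypothesis $S-T(\mathbb{Z}_{N}/H)$ there, and lift the resulting tiling back to $\mathbb{Z}_{N}$. First, using Lemma~\ref{lem3p1}(d) I would translate $B$ so that $0\in B$; since $B$ does not generate $\mathbb{Z}_{N}$, the subgroup $G:=\langle B\rangle$ is proper. Set $H:=G^{\perp}=\{x\in\mathbb{Z}_{N}:\chi_{g}(x)=1\text{ for all }g\in G\}$, which is nontrivial because $G$ is proper, and let $\pi:\mathbb{Z}_{N}\to\mathbb{Z}_{N}/H$ be the canonical projection. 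By the double-annihilator identity $(G^{\perp})^{\perp}=G$, we have $G=H^{\perp}$, so every character $\chi_{b}$ with $b\in B\subseteq G=H^{\perp}$ is trivial on $H$ and hence descends to a well-defined character $\bar\chi_{b}$ of $\mathbb{Z}_{N}/H$; in this way $B$ is naturally identified with a subset of $\widehat{\mathbb{Z}_{N}/H}\cong H^{\perp}$.

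The crucial step is to show that $\pi$ is injective on $A$. Here I would invoke Lemma~\ref{lem3p1}(b), so that $(B,A)$ is also a spectral pair, meaning $\sum_{b\in B}\chi_{a-a'}(b)=0$ for all distinct $a,a'\in A$. If two elements $a\neq a'$ of $A$ lay in the same $H$-coset, then $a-a'\in H$; since $B\subseteq H^{\perp}$ we would have $\chi_{a-a'}(b)=1$ for every $b\in B$, whence $\sum_{b\in B}\chi_{a-a'}(b)=|B|\neq0$, contradicting the spectral condition. Therefore $\pi|_{A}$ is injective and $\bar A:=\pi(A)$ satisfies $|\bar A|=|A|=|B|$.

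Next I would verify that $(\bar A,B)$ is a spectral pair in $\mathbb{Z}_{N}/H$. The cardinalities already match, and for distinct $b,b'\in B$ the orthogonality transfers directly: because $\chi_{b-b'}$ is constant on $H$-cosets and $\pi|_{A}$ is injective,
\[\sum_{\bar a\in\bar A}\bar\chi_{b-b'}(\bar a)=\sum_{a\in A}\chi_{b-b'}(a)=0.\]
Thus $(\bar A,B)$ is a spectral pair, and since $H\neq\{0\}$ the hypothesis $S-T(\mathbb{Z}_{N}/H)$ applies, producing $\bar T\subseteq\mathbb{Z}_{N}/H$ with $\bar A\oplus\bar T=\mathbb{Z}_{N}/H$. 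Finally I would lift by setting $T:=\pi^{-1}(\bar T)$. Then $|A|\cdot|T|=|\bar A|\cdot|\bar T|\cdot|H|=(N/|H|)\cdot|H|=N$, and if a nonzero element lay in $(A-A)\cap(T-T)$, projecting it would yield a nonzero element (nonzero by injectivity of $\pi|_{A}$) of $(\bar A-\bar A)\cap(\bar T-\bar T)$, contradicting Lemma~\ref{lem3p2}(d) for the quotient tiling. Hence $(A-A)\cap(T-T)=\{0\}$, and Lemma~\ref{lem3p2}(d) gives $A\oplus T=\mathbb{Z}_{N}$.

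The main obstacle is the injectivity of $\pi|_{A}$ in the second paragraph: this is precisely where the roles of $A$ and $B$ must be swapped via Lemma~\ref{lem3p1}(b), and where it becomes essential that the hypothesis concerns quotients $\mathbb{Z}_{N}/H$ rather than subgroups — it is the annihilator $H=\langle B\rangle^{\perp}$, not $\langle B\rangle$ itself, that controls the fibers of $A$ and makes $\bar A$ an honest set of the right size in the quotient. Once injectivity is established, the transfer of the spectral condition and the lifting of the tiling are routine.
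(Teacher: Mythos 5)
The paper itself contains no proof of this lemma---it is imported from \cite{KMSV20}---and your argument is correct and essentially reproduces the standard reduction used there: translate $B$ (via Lemma~\ref{lem3p1}(d)) into the proper subgroup $G=\langle B\rangle$, quotient by the annihilator $H=G^{\perp}\neq\{0\}$, use the symmetry $(B,A)$ of the spectral pair from Lemma~\ref{lem3p1}(b) to get injectivity of $\pi|_{A}$, apply $S-T(\mathbb{Z}_{N}/H)$ to the spectral pair $(\bar A,B)$, and pull the tiling back through $\pi^{-1}$, verifying $|A|\cdot|T|=N$ and $(A-A)\cap(T-T)=\{0\}$ as in Lemma~\ref{lem3p2}(d). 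All steps, including the double-annihilator identity $(G^{\perp})^{\perp}=G$ and the preservation of orthogonality under the quotient, check out.
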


\begin{lemma}\label{lem3p5}\cite{KMSV20}
Let $N$ be a natural number, $A$ a spectral set in $\mathbb{Z}_{N}$ and $p$ a prime divisor of $N$. Assume that $S-T(\mathbb{Z}_{\frac{N}{p}})$. If $A$ is the union of $\mathbb{Z}_{p}$-cosets, then $A$ tiles $\mathbb{Z}_{N}$.
\end{lemma}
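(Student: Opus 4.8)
The plan is to reduce to the quotient group $\mathbb{Z}_{N/p}$, where the hypothesis $S\text{-}T(\mathbb{Z}_{N/p})$ becomes available. Let $H$ denote the unique subgroup of order $p$ in $\mathbb{Z}_{N}$, so that the $\mathbb{Z}_{p}$-cosets are precisely the cosets of $H$, and let $\phi\colon\mathbb{Z}_{N}\to\mathbb{Z}_{N/p}$ be the natural projection with kernel $H$. Since $A$ is a union of $H$-cosets, we have $A=\phi^{-1}(\bar{A})$ with $\bar{A}=\phi(A)$ and $|A|=p\,|\bar{A}|$. The argument then has two halves: first show that $\bar{A}$ is spectral in $\mathbb{Z}_{N/p}$, and then lift a tiling of $\bar{A}$ back to a tiling of $A$.

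For the Fourier bookkeeping, write $\widehat{A}(b)=\sum_{a\in A}\chi_{b}(a)$ and let $H^{\perp}=\{b:\chi_{b}|_{H}\equiv 1\}=p\mathbb{Z}_{N}$ be the annihilator of $H$. Grouping $A$ into its $H$-cosets and using $\sum_{h\in H}\chi_{b}(h)=0$ for $b\notin H^{\perp}$, one finds that $\widehat{A}(b)=0$ whenever $b\notin H^{\perp}$, while for $b=pb'\in H^{\perp}$ a direct computation gives $\widehat{A}(pb')=p\,\widehat{\bar{A}}(b')$, where $\widehat{\bar{A}}(b')=\sum_{\bar{a}\in\bar{A}}\chi'_{b'}(\bar{a})$ is the corresponding character sum in $\mathbb{Z}_{N/p}$. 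Consequently $\mathbb{Z}_{N}\setminus H^{\perp}\subseteq\mathcal{Z}_{A}$ and $\mathcal{Z}_{A}\cap H^{\perp}=\{pb':b'\in\mathcal{Z}_{\bar{A}}\}$, so division by $p$ transports the zeros of $A$ lying in $H^{\perp}$ to the zeros of $\bar{A}$.

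The crux is to produce a spectrum for $\bar{A}$. Let $B$ be a spectrum of $A$, so that $|B|=p\,|\bar{A}|$ and $(B-B)\setminus\{0\}\subseteq\mathcal{Z}_{A}$ by Lemma~\ref{lem3p1}(c). Partition $B$ into the $p$ cosets of $H^{\perp}$; within one coset $B_{c}$ all pairwise differences lie in $H^{\perp}$. Fixing $b_{0}\in B_{c}$ and factoring $\chi_{b}|_{A}=\chi_{b_{0}}|_{A}\cdot\bigl(\chi'_{(b-b_{0})/p}\circ\phi\bigr)|_{A}$, every $\chi_{b}|_{A}$ with $b\in B_{c}$ lies in the image, under multiplication by the unimodular factor $\chi_{b_{0}}|_{A}$, of the $|\bar{A}|$-dimensional space of functions on $A$ that are constant on $H$-cosets. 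As these restrictions are orthogonal (being part of an orthogonal basis of $L^{2}(A)$), we get $|B_{c}|\le|\bar{A}|$; summing over the $p$ cosets together with $|B|=p\,|\bar{A}|$ forces $|B_{c}|=|\bar{A}|$ for every $c$. Taking any such coset and setting $\bar{B}=\{(b-b_{0})/p:b\in B_{c}\}\subseteq\mathbb{Z}_{N/p}$, the second paragraph yields $(\bar{B}-\bar{B})\setminus\{0\}\subseteq\mathcal{Z}_{\bar{A}}$ with $|\bar{B}|=|\bar{A}|$, so $(\bar{A},\bar{B})$ is a spectral pair by Lemma~\ref{lem3p1}(c). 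I expect this equidistribution-of-the-spectrum step to be the main obstacle, since it is the only place where spectrality of $A$ is genuinely used rather than formal properties of the projection.

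Finally, by $S\text{-}T(\mathbb{Z}_{N/p})$ the spectral set $\bar{A}$ tiles, say $\bar{A}\oplus\bar{T}=\mathbb{Z}_{N/p}$. Choose $T\subseteq\mathbb{Z}_{N}$ containing exactly one $\phi$-preimage of each element of $\bar{T}$, so $|A|\,|T|=p|\bar{A}|\cdot|\bar{T}|=N$. Given $g\in\mathbb{Z}_{N}$, the unique representation $\phi(g)=\bar{a}+\bar{t}$ with $\bar{a}\in\bar{A}$, $\bar{t}\in\bar{T}$ determines $t\in T$ uniquely, and then $a=g-t$ satisfies $\phi(a)=\bar{a}\in\bar{A}$, hence $a\in A=\phi^{-1}(\bar{A})$; conversely any representation $g=a+t$ projects to a representation of $\phi(g)$ in $\bar{A}\oplus\bar{T}$, which forces uniqueness. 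Therefore $A\oplus T=\mathbb{Z}_{N}$, that is, $A$ tiles $\mathbb{Z}_{N}$.
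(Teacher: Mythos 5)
The paper never proves this lemma---it is stated as a citation to \cite{KMSV20}---so there is no internal proof to compare against; your argument is correct and is essentially the standard quotient-reduction proof from that reference. Every step checks: the Fourier identities $\widehat{A}(b)=0$ for $b\notin H^{\perp}$ and $\widehat{A}(pb')=p\,\widehat{\bar{A}}(b')$, the dimension count (orthogonal nonzero functions inside the $|\bar{A}|$-dimensional space of $H$-coset-constant functions, twisted by the unimodular factor $\chi_{b_{0}}|_{A}$) forcing each $H^{\perp}$-coset to meet $B$ in exactly $|\bar{A}|$ points, the transfer of spectrality to $(\bar{A},\bar{B})$ via Lemma~\ref{lem3p1}(c), and the lift of the tiling $\bar{A}\oplus\bar{T}=\mathbb{Z}_{N/p}$ by choosing one $\phi$-preimage per element of $\bar{T}$, which crucially uses $A=\phi^{-1}(\bar{A})$.
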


\begin{lemma}\label{lemma2}\cite{Somlai21}
Let $0\in T\subseteq\mathbb{Z}_{N}$ be a generating set and assume that $p$ and $q$ are different prime divisors of $N$. Then there are elements $t_{1}\ne t_{2}\in T$ such that $p\nmid(t_{1}-t_{2})$ and $q\nmid(t_{1}-t_{2})$.
\end{lemma}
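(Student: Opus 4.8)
The plan is to argue by contradiction after projecting onto the quotient that remembers only residues modulo $p$ and modulo $q$. Concretely, I would consider the group homomorphism $\psi\colon\mathbb{Z}_{N}\to\mathbb{Z}_{p}\times\mathbb{Z}_{q}$ sending $x$ to the pair of residues $(x\bmod p,\,x\bmod q)$. Since $p$ and $q$ are distinct primes dividing $N$, this $\psi$ is surjective (its image is $\mathbb{Z}_{pq}\cong\mathbb{Z}_{p}\times\mathbb{Z}_{q}$ by the Chinese remainder theorem), so the hypothesis that $T$ generates $\mathbb{Z}_{N}$ forces $\psi(T)$ to generate the whole grid $\mathbb{Z}_{p}\times\mathbb{Z}_{q}$. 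Under this translation the desired conclusion, namely that there exist $t_{1}\neq t_{2}$ with $p\nmid(t_{1}-t_{2})$ and $q\nmid(t_{1}-t_{2})$, becomes the statement that two elements of $T$ have images differing in both coordinates.

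First I would negate the conclusion and record its meaning on the grid. Assuming no such pair exists means any two distinct elements of $T$ agree modulo $p$ or agree modulo $q$; hence any two distinct points of the image set $S:=\psi(T)$ share at least one coordinate (and, being distinct, exactly one). Note that distinct elements of $T$ landing on the same grid point cause no trouble, since they trivially agree in both coordinates.

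The combinatorial heart of the argument is then to show that a subset $S\subseteq\mathbb{Z}_{p}\times\mathbb{Z}_{q}$ in which every two distinct points share a coordinate must lie entirely in a single row (fixed first coordinate) or a single column (fixed second coordinate). This is short: pick two distinct points of $S$; they share, say, their first coordinate $a$ while differing in the second; then any further point, were its first coordinate not $a$, would be forced to match the distinct second coordinates of both, which is impossible. Hence every point of $S$ has first coordinate $a$, and $S\subseteq\{a\}\times\mathbb{Z}_{q}$ (symmetrically, $S$ lies in one column).

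Finally I would invoke the hypothesis $0\in T$, which supplies $(0,0)\in S$ and pins the common row or column through the origin: either $S\subseteq\{0\}\times\mathbb{Z}_{q}$ or $S\subseteq\mathbb{Z}_{p}\times\{0\}$. Both are proper subgroups of $\mathbb{Z}_{p}\times\mathbb{Z}_{q}$, so $S=\psi(T)$ cannot generate the grid, contradicting the generation established at the outset. The main subtlety, rather than any hard computation, is remembering to use $0\in T$: without it the single-row/single-column conclusion would be compatible with $S$ generating the grid, since the cyclic group $\mathbb{Z}_{p}\times\mathbb{Z}_{q}$ can be generated by a single nonzero point; the presence of the origin is exactly what rules out that escape.
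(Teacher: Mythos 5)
Your proof is correct. Note that the paper itself gives no proof of this lemma --- it is quoted from \cite{Somlai21} --- so there is nothing internal to compare against; but your argument is complete and is essentially the standard one. Reducing modulo $p$ and modulo $q$ simultaneously via $\psi\colon\mathbb{Z}_{N}\to\mathbb{Z}_{p}\times\mathbb{Z}_{q}$ is legitimate because $p$ and $q$ divide $N$, surjectivity follows from the Chinese remainder theorem, and a homomorphic image of a generating set generates the image. Your combinatorial step (two points sharing exactly one coordinate force every further point into their common row or column) is sound, and you correctly identify that $0\in T$ is what pins the row or column to pass through the origin, making it a proper subgroup and yielding the contradiction. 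The only cosmetic remark: when you ``pick two distinct points of $S$'' you implicitly assume $|S|\ge 2$; the case $|S|=1$ is immediate since then $S=\{(0,0)\}$ generates nothing, so no gap results. An equivalent, slightly more pedestrian phrasing of the same idea: since $0\in T$, the negated conclusion forces every $t\in T$ to be divisible by $p$ or by $q$; if some $t_{1}$ were divisible by $p$ but not $q$ and some $t_{2}$ by $q$ but not $p$, then $t_{1}-t_{2}$ would already witness the lemma; hence all of $T$ is divisible by one fixed prime, contradicting generation. Your projection formulation and this divisibility formulation are the same argument.
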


\begin{lemma}\label{lemma1}
  Let $p$ be a prime and set $\zeta=\zeta_{p^n}$, a primitive $p^n$-th root of unity. Let $c=c_{p^n-1}\zeta^{p^n-1}+c_{p^n-2}\zeta^{p^n-2}+\cdots+c_1\zeta+c_0$, where $c_i\in\mathbb{Z}$, $0\le i\le p^n-1$. Then $c=0$ if and only if $c_i=c_j$ for any $i,j$ with $i\equiv j\pmod{p^{n-1}}$.
\end{lemma}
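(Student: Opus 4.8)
The plan is to reduce everything to the explicit form of the minimal polynomial of $\zeta=\zeta_{p^n}$ over $\mathbb{Q}$, namely the cyclotomic polynomial
\[\Phi_{p^n}(x)=1+x^{p^{n-1}}+x^{2p^{n-1}}+\cdots+x^{(p-1)p^{n-1}}=\sum_{k=0}^{p-1}x^{kp^{n-1}},\]
of degree $\varphi(p^n)=(p-1)p^{n-1}$. Writing $m=p^{n-1}$ for brevity, the key structural fact I would use is that $c=f(\zeta)$ for the polynomial $f(x)=\sum_{i=0}^{p^n-1}c_i x^i\in\mathbb{Z}[x]$, and that $f(\zeta)=0$ holds if and only if $\Phi_{p^n}(x)$ divides $f(x)$ in $\mathbb{Q}[x]$, since $\Phi_{p^n}$ is the minimal polynomial of $\zeta$.

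For the ``if'' direction I would group the indices $0\le i\le p^n-1$ by their residue $r=i\bmod m$. For each fixed $r$ the hypothesis $c_i=c_j$ (for $i\equiv j\pmod m$) gives a common value $d_r$, and factoring out $\zeta^r$ yields
\[c=\sum_{r=0}^{m-1}d_r\zeta^r\sum_{k=0}^{p-1}\zeta^{km}=\sum_{r=0}^{m-1}d_r\zeta^r\,\Phi_{p^n}(\zeta)=0,\]
because $\zeta^m$ is a primitive $p$-th root of unity, so $\sum_{k=0}^{p-1}\zeta^{km}=\Phi_{p^n}(\zeta)=0$. This direction is immediate.

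For the ``only if'' direction, suppose $c=f(\zeta)=0$, so that $f=g\cdot\Phi_{p^n}$ for some $g\in\mathbb{Q}[x]$ (indeed $g\in\mathbb{Z}[x]$ by Gauss's lemma, though rationality suffices). Since $\deg f\le p^n-1=pm-1$ and $\deg\Phi_{p^n}=(p-1)m$, the quotient satisfies $\deg g\le m-1$, so $g(x)=\sum_{r=0}^{m-1}\lambda_r x^r$. Expanding the product,
\[f(x)=\left(\sum_{r=0}^{m-1}\lambda_r x^r\right)\left(\sum_{k=0}^{p-1}x^{km}\right)=\sum_{r=0}^{m-1}\sum_{k=0}^{p-1}\lambda_r\,x^{r+km},\]
and since $(r,k)\mapsto r+km$ is a bijection from $\{0,\dots,m-1\}\times\{0,\dots,p-1\}$ onto $\{0,\dots,pm-1\}$, comparing coefficients of $x^i$ gives $c_i=\lambda_{i\bmod m}$. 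Hence $c_i$ depends only on $i\bmod p^{n-1}$, which is exactly the claimed condition.

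The argument is short, and the only point requiring care is the degree bookkeeping: the crux is that the bound $\deg f\le pm-1$ forces $\deg g\le m-1$, which is precisely what makes the coefficient comparison a clean bijection and forces the periodicity modulo $p^{n-1}$. Everything else is routine.
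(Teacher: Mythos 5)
Your proposal is correct and takes essentially the same route as the paper: both reduce the statement to the fact that $f(\zeta)=0$ if and only if $\Phi_{p^n}(x)\mid f(x)$, and then use the degree bound to force the quotient to have degree at most $p^{n-1}-1$. The only difference is that you explicitly carry out the coefficient comparison that the paper compresses into ``Hence, the statement follows,'' which is a welcome addition rather than a divergence.
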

\begin{proof}
  Let $f(x)=c_{p^n-1}x^{p^n-1}+c_{p^n-2}x^{p^n-2}+\cdots+c_1x+c_0$, then $c=0$ if and only if $\zeta$ is a root of $f(x)$. Since the minimal polynomial of $\zeta$ over $\mathbb{Z}$ is
  \[\Phi_{p^{n}}(x)=x^{(p-1)p^{n-1}}+x^{(p-2)p^{n-1}}+\cdots+x^{p^{n-1}}+1,\]
  then $c=0$ if and only if there exists a polynomial $g(x)\in\mathbb{Z}[x]$ such that
  \[f(x)=\Phi_{p^{n}}(x)g(x).\]
  Hence, the statement follows.
\end{proof}

\begin{lemma}\label{lemma7}
Let $V\subset\mathbb{Z}_{p^n}$ with $|V|=p^{t}$, $t\le n$. Let $I\subseteq[0,n-1]$, $|I|=t$ and $n-1\in I$. If $0\in V$ and $V-V\subseteq\{\sum_{i\in I}a_ip^i: a_i\in[0,p-1]\}$, then $V=\{\sum_{i\in I}a_ip^i: a_i\in[0,p-1]\}$.
\end{lemma}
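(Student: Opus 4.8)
The plan is to abbreviate the target set as $W=\{\sum_{i\in I}a_ip^i:\ a_i\in[0,p-1]\}$ and to prove $V=W$ by establishing the inclusion $V\subseteq W$ together with the cardinality equality $|V|=|W|$; since both sets are finite, inclusion plus equal size yields $V=W$. This reduces the whole statement to two small observations.

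First I would pin down $|W|$. Its elements arise by freely choosing a digit $a_i\in\{0,1,\dots,p-1\}$ for each $i\in I$ and forming $\sum_{i\in I}a_ip^i$. Because $I\subseteq[0,n-1]$, every such sum lies in $[0,p^n-1]$ and is simply a partial base-$p$ expansion, so distinct digit tuples give distinct integers in $[0,p^n-1]$ and hence distinct residues modulo $p^n$. Consequently the map $(a_i)_{i\in I}\mapsto\sum_{i\in I}a_ip^i$ is injective, and $|W|=p^{|I|}=p^t=|V|$.

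Next I would obtain $V\subseteq W$ directly from the hypotheses: since $0\in V$, for every $v\in V$ we have $v=v-0\in V-V\subseteq W$, so each element of $V$ lies in $W$. Combining this with $|V|=p^t=|W|$ forces $V=W$. The argument is short and its only delicate point is the cardinality count $|W|=p^t$, where one must check that keeping all exponents below $n$ (the restriction $I\subseteq[0,n-1]$) prevents any collision modulo $p^n$ among the partial base-$p$ sums; once that injectivity is noted, the inclusion and the final equality are immediate. (The extra hypothesis $n-1\in I$ is not needed for this particular deduction and may be carried only for use elsewhere.)
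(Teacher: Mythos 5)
Your proof is correct, and it is substantially simpler than the one in the paper. Writing $W=\{\sum_{i\in I}a_ip^i:\ a_i\in[0,p-1]\}$ for the target set, your key observation --- that $0\in V$ forces $V=V-0\subseteq V-V\subseteq W$, after which the count $|V|=p^t=|W|$ closes the argument --- is sound, and every step checks out; in particular the difference set $V-V$ as used throughout this paper does contain $v-0=v$ for each $v\in V$, so the inclusion $V\subseteq V-V$ is legitimate. The paper instead argues by induction on $|I|$: it expands each $v\in V$ in base $p$, partitions $V$ according to the digit at the smallest position $i_1\in I$, uses the pigeonhole principle twice to show each class has size exactly $p^{t-1}$, and applies the induction hypothesis to each class. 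Your route bypasses all of that machinery and, as you note, never uses the hypothesis $n-1\in I$, which shows that assumption is superfluous for the lemma as stated (it is presumably carried along only because it is available where the lemma is invoked in Section 4). The one point that must be made explicit in a final write-up is the one you already flag: the injectivity modulo $p^n$ of $(a_i)_{i\in I}\mapsto\sum_{i\in I}a_ip^i$, which holds because every such sum is a nonnegative integer bounded by $\sum_{i=0}^{n-1}(p-1)p^i=p^n-1$, so distinct digit tuples give distinct integers in $[0,p^n-1]$ and hence distinct residues.
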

\begin{proof}
  We prove the lemma by induction. If $|I|=1$, then $I=\{n-1\}$. It is easy to see that $V=\{ap^{n-1}:\ a\in[0,p-1]\}$. Suppose that the statement holds for $|I|< t$.

  Let $|I|=t$, $I=\{i_j:\ j\in[1,t]\}$, and $0\le i_1<i_2<\cdots<i_t= n-1$. For any $v\in V$, we can write $v$ as $v=\sum_{i=0}^{n-1}v_ip^i$, where $v_i\in[0,p-1]$.
    Since $0\in V$ and $V-V\subseteq\{\sum_{i\in I}a_ip^i: a_i\in[0,p-1]\}$, we have $v_i=0$ for $i<i_1$. Denote
  \[V_k=\{v\in V:\ v_{i_1}=k\}.\]
  Then $V=\cup_{k=0}^{p-1}V_k$.
By the pigeonhole principle, there exists $k$ such that $|V_k|\ge p^{t-1}$. Note that
\[V_k-V_k\subseteq\{\sum_{i\in I\backslash\{i_1\}}a_ip^i: a_i\in[0,p-1]\}.\]
By the pigeonhole principle again, we have $|V_{k}|\le p^{t-1}$. Hence $|V_{k}|= p^{t-1}$ for all $k\in[0,p-1]$. By induction, we have $V_{k}=\{kp^{i_1}+\sum_{i\in I\backslash\{i_1\}}a_ip^i: a_i\in[0,p-1]\}$, and so $V=\{\sum_{i\in I}a_ip^i: a_i\in[0,p-1]\}$.
\end{proof}

\section{Technique tools}\label{section3}
Throughout the following sections, cyclic group $\mathbb{Z}_{N}$ will be written multiplicatively. Let $\mathbb{Z}_{N}=\langle u\rangle$, then all the statements in Section~\ref{section2} still hold under the isomorphism map: $i\rightarrow u^{i}$.

Let $\mathbb{Z}[\mathbb{Z}_{N}]$ denote the group ring of $\mathbb{Z}_{N}$ over $\mathbb{Z}$. For any $X\in\mathbb{Z}[\mathbb{Z}_{N}]$, $X$ can be written as formal sums $X=\sum_{g\in\mathbb{Z}_{N}}x_{g}g$, where $x_{g}\in\mathbb{Z}$. The addition and subtraction of elements in $\mathbb{Z}[\mathbb{Z}_{N}]$ is defined componentwise, i.e.
\[\sum_{g\in\mathbb{Z}_{N}}x_{g}g\pm\sum_{g\in\mathbb{Z}_{N}}y_{g}g:=\sum_{g\in\mathbb{Z}_{N}}(x_{g}\pm y_g)g.\]
The multiplication is defined by
\[(\sum_{g\in\mathbb{Z}_{N}}x_{g}g)(\sum_{g\in\mathbb{Z}_{N}}y_{g}g):=\sum_{g\in\mathbb{Z}_{N}}(\sum_{h\in\mathbb{Z}_{N}}x_hy_{h^{-1}g})g.\]
For $X=\sum_{g\in\mathbb{Z}_{N}}x_{g}g$ and $t\in\mathbb{Z}$, we define
\[X^{(t)}:=\sum_{g\in\mathbb{Z}_{N}}x_{g}g^{t}.\]
For any set $X$ whose elements belong to $\mathbb{Z}_{N}$ ($X$ may be a multiset), we can identify $X$ with the group ring element $\sum_{g\in\mathbb{Z}_{N}}x_{g}g$, where $x_g$ is the multiplicity of $g$ appearing in $X$.

For any $g=u^a,h=u^b\in\mathbb{Z}_{N}$, define
\[\chi_{g,N}(h):=e^{\frac{2\pi i\cdot ab}{N}}.\]
We will use $\chi_{a,N}$ instead of $\chi_{u^a,N}=\chi_{g,N}$ if there is no misunderstanding. For any $\chi \in \widehat{\mathbb{Z}_{N}}$ and $X=\sum_{g\in\mathbb{Z}_{N}}x_{g}g\in\mathbb{Z}[\mathbb{Z}_{N}]$, define
\[\chi(X):=\sum_{g\in \mathbb{Z}_{N}} x_{g} \chi(g).\]
Then the pair $(A,B)$ forms a spectral pair if and only if
\[|A|=|B|\textup{ and }\chi_{b-b',N}(A)=0\textup{ for all }u^b\ne u^{b'}\in B.\]

Let $\mathbb{Z}_{p^{n}p_{1}\cdots p_{k}}=\langle a,a_1,\dots,a_{k}\rangle$, where $o(a)=p^{n}$, $o(a_{i})=p_i$ for $i=1,\dots,k$. Let $A$ be a subset of $\mathbb{Z}_{p^{n}p_{1}\cdots p_{k}}$, then $A$ can be written as $A=\sum_{i_1=0}^{p_1-1}\cdots\sum_{i_{k}=0}^{p_k-1}A_{i_1\dots i_k}a_{1}^{i_1}\cdots a_{k}^{i_k}$, where $A_{i_1\dots i_k}\in \mathbb{Z}_{\ge0}[\langle a\rangle]$. Denote
\[\mathcal{I}_{t,s}:=\{(i_1,i_2,\dots,i_k):\text{ there are exactly } s \text{ of } j\in[t+1,k] \text{ such that } i_{j}=0\}.\]
Let $A_{\mathcal{I}_{t,s}}:=\sum_{I\in \mathcal{I}_{t,s}}A_{I}.$
Then we have the following lemma, which can transfer the problem from $\mathbb{Z}_{p^{n}p_{1}\cdots p_{k}}$ to $\mathbb{Z}_{p^{n}}$.
\begin{lemma}
Let $0\le t\le k$, $0\le i\le n$, then $p^{i}p_{1}\cdots p_{t}\in\mathcal{Z}_{A}$ if and only if \[\chi_{p^{i},p^{n}}(\sum_{s=0}^{k-t}\sum_{j=1}^{t}\sum_{i_j=0}^{p_j-1}(-1)^{s}A_{\mathcal{I}_{t,s}})=0\]
 for all $l\in[t+1,k]$, $i_l\in[0,p_j-1]$, where $p^{i}p_{1}\cdots p_{t}:=p^{i}$ if $t=0$.
\end{lemma}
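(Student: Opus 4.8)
The plan is to read membership in $\mathcal{Z}_A$ as the vanishing of a single character and then to separate the variables prime by prime. Set $N=p^np_1\cdots p_k$ and $c=p^ip_1\cdots p_t$, and write $\chi:=\chi_{c,N}$. Since $p_1\cdots p_t\mid c$ while $p_{t+1},\dots,p_k$ are coprime to $c$, the restriction of $\chi$ to the direct factors $\langle a_1\rangle,\dots,\langle a_t\rangle$ is trivial, its restriction to each of $\langle a_{t+1}\rangle,\dots,\langle a_k\rangle$ is a nontrivial (hence primitive) character $\omega_j$ of $\langle a_j\rangle$, and its restriction to $\langle a\rangle$ has order $p^{n-i}$, i.e. agrees with $\chi_{p^i,p^n}$ up to an automorphism of $\langle a\rangle$. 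Feeding the decomposition $A=\sum_I A_I a_1^{i_1}\cdots a_k^{i_k}$ into the algebra homomorphism $\chi$ and using multiplicativity across the factors gives
\[\chi(A)=\sum_{i_{t+1},\dots,i_k} B_{i_{t+1}\cdots i_k}\prod_{j>t}\omega_j^{i_j},\qquad B_{i_{t+1}\cdots i_k}=\chi_{p^i,p^n}\Big(\sum_{i_1,\dots,i_t}A_I\Big),\]
where the $B$'s absorb the free sum over the trivial coordinates $i_1,\dots,i_t$. Thus $p^ip_1\cdots p_t\in\mathcal{Z}_A$ is exactly the statement $\chi(A)=0$.

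The heart of the argument is to extract from $\chi(A)=0$ a clean family of conditions living entirely in $\mathbb{Z}[\langle a\rangle]$. Here I would work in the cyclotomic field and exploit that the conductors $p^n$ and $p_{t+1}\cdots p_k$ are coprime, so that $\mathbb{Q}(\zeta_{p^n})$ and $\mathbb{Q}(\zeta_{p_{t+1}\cdots p_k})$ are linearly disjoint over $\mathbb{Q}$. For each $j>t$ the minimal polynomial of $\omega_j$ is $\Phi_{p_j}(x)=1+x+\cdots+x^{p_j-1}$, which yields the relation $\omega_j^0=-\sum_{m=1}^{p_j-1}\omega_j^m$ and makes $\{\omega_j,\omega_j^2,\dots,\omega_j^{p_j-1}\}$ a $\mathbb{Q}$-basis of $\mathbb{Q}(\zeta_{p_j})$. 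Substituting this relation for every coordinate with $i_j=0$ rewrites $\chi(A)$ as a combination of the tensor monomials $\prod_{j>t}\omega_j^{i_j}$ all of whose exponents are nonzero, and by linear disjointness these monomials are $\mathbb{Q}(\zeta_{p^n})$-linearly independent. Hence $\chi(A)=0$ if and only if the coefficient of each such monomial vanishes. Reading off the coefficient of a fixed monomial $\prod_{j>t}\omega_j^{i_j}$ (each $i_j\in[1,p_j-1]$), a term $B_I$ contributes precisely when each coordinate of $I$ equals either the prescribed value $i_j$ or $0$, and each coordinate reset to $0$ carries a factor $-1$ from the relation above. Grouping the surviving index tuples by the number $s$ of zero coordinates turns this coefficient into $\chi_{p^i,p^n}\big(\sum_{s=0}^{k-t}(-1)^s A_{\mathcal{I}_{t,s}}\big)$, and letting the prescribed values $i_{t+1},\dots,i_k$ range over all nonzero choices produces exactly the stated family of equations.

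Two points remain to tidy up. First, the coefficient is a priori the value of $X:=\sum_{s}(-1)^s A_{\mathcal{I}_{t,s}}$ under the restriction of $\chi$ to $\langle a\rangle$, which may be a primitive character of order $p^{n-i}$ other than $\chi_{p^i,p^n}$; but $X\in\mathbb{Z}[\langle a\rangle]$, so by Lemma~\ref{lemma1} (for the $p^{n-i}$-th roots of unity) the vanishing of any such character is the same coefficient condition on $X$, which is in particular conjugation invariant, and one may state the condition using $\chi_{p^i,p^n}$. Second, the endpoints $t=0$ and $t=k$ should be checked against the conventions: for $t=k$ all the $\omega_j$ disappear and a single equation $\chi_{p^i,p^n}(\sum_I A_I)=0$ survives, matching $p^ip_1\cdots p_t:=p^i$, while for $t=0$ every $i_j$ ranges over the nonzero residues. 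The main obstacle is the multivariate inclusion–exclusion bookkeeping: organizing the substitutions so that the $(-1)^s$ weights and the sets $\mathcal{I}_{t,s}$ come out correctly, and verifying that passing to the nonzero-exponent monomials genuinely gives a basis, so that the reduction is an equivalence in both directions rather than only an implication.
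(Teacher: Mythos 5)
Your proof is correct and follows essentially the same route as the paper: expand $A$ over the coset decomposition, observe that $\chi_{p^{i}p_{1}\cdots p_{t},N}$ is trivial on $\langle a_1\rangle,\dots,\langle a_t\rangle$ and primitive on the remaining factors, and eliminate the exponent-zero terms via $1=-(\zeta_{p_j}+\cdots+\zeta_{p_j}^{p_j-1})$ to reduce $\chi(A)=0$ to coefficientwise vanishing, which is exactly the stated inclusion--exclusion family. The only difference is presentational --- the paper peels off one prime at a time using that $\zeta_{p_j},\dots,\zeta_{p_j}^{p_j-1}$ is a basis of each successive cyclotomic extension, whereas you substitute simultaneously and invoke linear disjointness --- and your explicit remark on the Galois-conjugation point for the restriction to $\langle a\rangle$ is a small tidiness the paper leaves implicit.
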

\begin{proof}
  By the definition of zeros of a set, we have $p^{i}p_{1}\cdots p_{t}\in\mathcal{Z}_{A}$ if and only if
  \begin{align*}
  0&=\chi_{p^{i}p_{1}\cdots p_{t},p^{n}p_{1}\cdots p_{k}}(A)\\
   &=\chi_{p^{i}p_{1}\cdots p_{t},p^{n}p_{1}\cdots p_{k}}(\sum_{i_1=0}^{p_1-1}\cdots\sum_{i_{k}=0}^{p_k-1}A_{i_1\dots i_k}a_{1}^{i_1}\cdots a_{k}^{i_k})\\
   &=\sum_{i_1=0}^{p_1-1}\cdots\sum_{i_{k}=0}^{p_k-1}\chi_{p^{i},p^n}(A_{i_1\dots i_k})\zeta_{p_{t+1}}^{i_{t+1}}\cdots \zeta_{p_{k}}^{i_k}\\
   &=\sum_{i_{k}=0}^{p_k-1}(\sum_{i_{t+1}=0}^{p_{t+1}-1}\cdots\sum_{i_{k-1}=0}^{p_{k-1}-1}\chi_{p^{i},p^n}(\sum_{i_1=0}^{p_1-1}\cdots\sum_{i_{t}=0}^{p_t-1}A_{i_1\dots i_k})\zeta_{p_{t+1}}^{i_{t+1}}\cdots \zeta_{p_{k-1}}^{i_{k-1}})\zeta_{p_{k}}^{i_k}\\
   &=\sum_{i_{k}=1}^{p_k-1}(\sum_{i_{t+1}=0}^{p_{t+1}-1}\cdots\sum_{i_{k-1}=0}^{p_{k-1}-1}\chi_{p^{i},p^n}(\sum_{i_1=0}^{p_1-1}\cdots\sum_{i_{t}=0}^{p_t-1}(A_{i_1\dots i_k}-A_{i_1\dots i_{k-1}0}))\zeta_{p_{t+1}}^{i_{t+1}}\cdots \zeta_{p_{k-1}}^{i_{k-1}})\zeta_{p_{k}}^{i_k}
  \end{align*}
  Since $\zeta_{p_k},\zeta_{p_k}^{2},\dots,\zeta_{p_k}^{p_k-1}$ forms a basis of $\mathbb{Q}(\zeta_{p^{n}p_{1}\cdots p_{k}})/\mathbb{Q}(\zeta_{p^{n}p_{1}\cdots p_{k-1}})$, then $\chi_{p^{i}p_{1}\cdots p_{t},p^{n}p_{1}\cdots p_{k}}(A)=0$ is equivalent to $\sum_{i_{t+1}=0}^{p_{t+1}-1}\cdots\sum_{i_{k-1}=0}^{p_{k-1}-1}\chi_{p^{i},p^n}(\sum_{i_1=0}^{p_1-1}\cdots\sum_{i_{t}=0}^{p_t-1}(A_{i_1\dots i_k}-A_{i_1\dots i_{k-1}0}))\zeta_{p_{t+1}}^{i_{t+1}}\cdots \zeta_{p_{k-1}}^{i_{k-1}}=0$ for all $i_k\in[0,p_k-1]$. Repeating above arguments, we have the statement.
\end{proof}

In particular, let $\mathbb{Z}_{p^{n}qr}=\langle a,b,c\rangle$, where $o(a)=p^{n}$, $o(b)=q$ and $o(c)=r$, and write $A=\sum_{j=0}^{q-1}\sum_{k=0}^{r-1}A_{jk}b^{j}c^{k}$, where $A_{jk}\in\mathbb{Z}_{\ge0}[\langle a\rangle]$. Then we have the following corollary.
\begin{corollary}\label{coro1}
\begin{enumerate}
  \item[(1)] $p^{i}\in\mathcal{Z}_{A}$ if and only if $\chi_{p^{i},p^n}(A_{jk}-A_{j0}-A_{0k}+A_{00})=0$ for all $j\in[0,q-1],k\in[0,r-1]$.
  \item[(2)] $p^{i}q\in\mathcal{Z}_{A}$ if and only if $\chi_{p^{i},p^n}(\sum_{j=0}^{q-1}(A_{jk}-A_{j0}))=0$ for all $k\in[0,r-1]$.
  \item[(3)] $p^{i}r\in\mathcal{Z}_{A}$ if and only if $\chi_{p^{i},p^n}(\sum_{k=0}^{r-1}(A_{jk}-A_{0k}))=0$ for all $j\in[0,q-1]$.
  \item[(4)] $p^{i}qr\in\mathcal{Z}_{A}$ if and only if $\chi_{p^{i},p^n}(\sum_{j=0}^{q-1}\sum_{k=0}^{r-1}A_{jk})=0$.
\end{enumerate}
\end{corollary}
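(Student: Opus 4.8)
The plan is to obtain all four statements as the $k=2$ specialization of the preceding lemma, with $(p_1,p_2)=(q,r)$ for parts (1), (2), (4) and $(p_1,p_2)=(r,q)$ for part (3); the only real work is unwinding the index sets $\mathcal{I}_{t,s}$ into explicit expressions, which for $k=2$ is cleaner to do by redoing the short computation directly. For a frequency of the form $p^{i}p_{1}\cdots p_{t}$ I would first factor the character $\chi_{p^{i}p_{1}\cdots p_{t},\,p^{n}qr}$ through the Sylow decomposition $\langle a\rangle\times\langle b\rangle\times\langle c\rangle$. Writing a general group element as $a^{x}b^{j}c^{k}$, the $\langle a\rangle$-part contributes $\chi_{p^{i},p^{n}}(a^{x})$ up to a unit twist, the $\langle b\rangle$-part contributes $1$ exactly when $q$ divides the frequency and a power $\zeta_{q}^{\alpha j}$ of a primitive $q$-th root otherwise, and symmetrically the $\langle c\rangle$-part contributes $1$ or $\zeta_{r}^{\beta k}$ according to whether $r$ divides the frequency. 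This reduces the condition $\chi(A)=0$ to an identity $\sum_{j,k}\chi_{p^{i},p^{n}}(A_{jk})\,\zeta_{q}^{\alpha j}\zeta_{r}^{\beta k}=0$, with the appropriate root-of-unity factor deleted whenever the corresponding prime divides the frequency.

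Next I would extract the per-coordinate conditions by linear algebra over $\mathbb{Q}(\zeta_{p^{n}})$. The coefficients $\chi_{p^{i},p^{n}}(A_{jk})$ lie in $\mathbb{Q}(\zeta_{p^{n}})$, the powers $\{\zeta_{q}^{j}\}_{j=0}^{q-1}$ satisfy the single relation $\sum_{j=0}^{q-1}\zeta_{q}^{j}=0$ with $\{1,\zeta_{q},\dots,\zeta_{q}^{q-2}\}$ a basis of $\mathbb{Q}(\zeta_{p^{n}},\zeta_{q})$ over $\mathbb{Q}(\zeta_{p^{n}})$, and likewise for $\zeta_{r}$. For part (4) both factors vanish and the identity collapses at once to $\chi_{p^{i},p^{n}}(\sum_{j,k}A_{jk})=0$. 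For parts (2) and (3) exactly one factor survives; substituting $\zeta_{r}^{r-1}=-(1+\cdots+\zeta_{r}^{r-2})$ forces the coefficient $\sum_{j}\chi_{p^{i},p^{n}}(A_{jk})$ to be independent of $k$, which is precisely $\chi_{p^{i},p^{n}}(\sum_{j}(A_{jk}-A_{j0}))=0$. For part (1) I would run this elimination twice: the first step (in $\zeta_{r}$) gives that $\sum_{j}\chi_{p^{i},p^{n}}(A_{jk})\zeta_{q}^{\alpha j}$ is independent of $k$, and the second (in $\zeta_{q}$), applied to the difference, telescopes to the four-term expression $A_{jk}-A_{j0}-A_{0k}+A_{00}$.

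The one genuinely delicate point, which I expect to be the main obstacle, is the unit twist on the $\langle a\rangle$-component: since $a$ generates the $p$-Sylow $\langle u^{qr}\rangle$, the frequency $p^{i}p_{1}\cdots p_{t}$ restricts to $\chi_{(p_{1}\cdots p_{t})p^{i},\,p^{n}}$ rather than to $\chi_{p^{i},p^{n}}$ itself, the discrepancy being the unit $p_{1}\cdots p_{t}\bmod p^{n}$. This costs nothing but must be handled explicitly: because $p_{1}\cdots p_{t}$ is coprime to $p$, the two characters differ by the Galois automorphism $\zeta_{p^{n}}\mapsto\zeta_{p^{n}}^{\,p_{1}\cdots p_{t}}$ of $\mathbb{Q}(\zeta_{p^{n}})/\mathbb{Q}$, which fixes $\mathbb{Z}$ and therefore sends a value to $0$ if and only if the original value is $0$. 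As each $A_{jk}$, and each integer combination of them, has integer coefficients, vanishing of $\chi_{(p_{1}\cdots p_{t})p^{i},p^{n}}$ on such an element is equivalent to vanishing of $\chi_{p^{i},p^{n}}$, which is exactly what licenses stating all four conditions with the clean character $\chi_{p^{i},p^{n}}$. Assembling these observations for the cases $t=0,1,1,2$ yields statements (1)--(4) respectively.
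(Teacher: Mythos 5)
Your proposal is correct and follows essentially the same route as the paper, which obtains the corollary as the $k=2$ case of the preceding lemma: decompose the character over the Sylow components, use that the powers of $\zeta_q$ (resp.\ $\zeta_r$) satisfy only the relation $1+\zeta+\cdots+\zeta^{q-1}=0$ over the base cyclotomic field, and telescope to get the coefficient conditions. Your explicit treatment of the unit twist on the $\langle a\rangle$-component via the Galois automorphism $\zeta_{p^n}\mapsto\zeta_{p^n}^{p_1\cdots p_t}$ is a point the paper passes over silently, and it is handled correctly.
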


If $A$ has many zeros, then we can get more information about the sets $A_{jk}$, $j\in[0,q-1],k\in[0,r-1]$.
\begin{lemma}\label{lemma3}
  \begin{enumerate}
    \item [(1)] If $p^{i},p^{i}q\in\mathcal{Z}_{A}$, then $\chi_{p^{i},p^n}(A_{jk}-A_{j0})=0$ for all $j\in[0,q-1],k\in[0,r-1]$.
    \item [(2)] If $p^{i},p^{i}r\in\mathcal{Z}_{A}$, then $\chi_{p^{i},p^n}(A_{jk}-A_{0k})=0$ for all $j\in[0,q-1],k\in[0,r-1]$.
    \item [(3)] If $p^{i}q,p^{i}r\in\mathcal{Z}_{A}$, then $r\chi_{p^i,p^n}(\sum_{j=0}^{q-1}A_{jk})=q\chi_{p^i,p^n}(\sum_{k=0}^{r-1}A_{jk})$ for all $j\in[0,q-1],k\in[0,r-1]$.
    \item [(4)] If $p^{i}q,p^{i}qr\in\mathcal{Z}_{A}$, then $\chi_{p^{i},p^n}(\sum_{j=0}^{q-1}A_{jk})=0$ for all $k\in[0,r-1]$.
    \item [(5)] If $p^{i}r,p^{i}qr\in\mathcal{Z}_{A}$, then $\chi_{p^{i},p^n}(\sum_{k=0}^{r-1}A_{jk})=0$ for all $k\in[0,r-1]$.
    \item [(6)] If $p^{i},p^{i}q,p^{i}r\in\mathcal{Z}_{A}$, then $\chi_{p^{i},p^n}(A_{jk}-A_{00})=0$ for all $j\in[0,q-1],k\in[0,r-1]$.
    \item [(7)] If $p^{i},p^{i}r,p^{i}q,p^{i}qr\in\mathcal{Z}_{A}$, then $\chi_{p^{i},p^n}(A_{jk})=0$ for all $j\in[0,q-1],k\in[0,r-1]$.
  \end{enumerate}
\end{lemma}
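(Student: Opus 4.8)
The plan is to push every hypothesis through the character $\chi_{p^i,p^n}$ and reduce all seven statements to elementary linear algebra over $\mathbb{C}$. Write $a_{jk} := \chi_{p^i,p^n}(A_{jk}) \in \mathbb{C}$ for $j \in [0,q-1]$, $k \in [0,r-1]$. Since $\chi_{p^i,p^n}$ is additive, Corollary~\ref{coro1} transcribes the four possible hypotheses into linear relations among the $a_{jk}$: the condition $p^i \in \mathcal{Z}_A$ becomes $a_{jk} - a_{j0} - a_{0k} + a_{00} = 0$ for all $j,k$ (call it (P)); $p^i q \in \mathcal{Z}_A$ becomes $\sum_{j} a_{jk} = \sum_{j} a_{j0}$ for all $k$, i.e.\ the column sums $\sum_j a_{jk}$ are independent of $k$ (call it (Q)); $p^i r \in \mathcal{Z}_A$ becomes the row sums $\sum_k a_{jk}$ independent of $j$ (call it (R)); and $p^i qr \in \mathcal{Z}_A$ becomes $\sum_{j,k} a_{jk} = 0$ (call it (S)). The whole lemma then reduces to deriving consequences of (P), (Q), (R), (S) using only that $q$ and $r$ are nonzero integers, hence units in $\mathbb{C}$.

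For (1) I would sum (P) over $j \in [0,q-1]$ to get $\sum_j a_{jk} = \sum_j a_{j0} + q(a_{0k} - a_{00})$; substituting (Q) gives $q(a_{0k} - a_{00}) = 0$, so $a_{0k} = a_{00}$, and feeding this back into (P) yields $a_{jk} = a_{j0}$, as claimed. Statement (2) is the mirror image: summing (P) over $k$ and using (R) gives $r(a_{j0}-a_{00})=0$, so $a_{j0} = a_{00}$, whence $a_{jk} = a_{0k}$. For (3) note that (Q) says $\sum_j a_{jk}$ is a constant $C$ independent of $k$ and (R) says $\sum_k a_{jk}$ is a constant $D$ independent of $j$; evaluating the total sum $\sum_{j,k} a_{jk}$ in the two orders gives $rC = qD$, which is exactly $r\sum_j a_{jk} = q\sum_k a_{jk}$.

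For (4), (Q) makes $\sum_j a_{jk} = C$ constant, so the total sum equals $rC$, and (S) forces $rC = 0$, hence $\sum_j a_{jk} = 0$; statement (5) follows symmetrically with $q$ in place of $r$. Finally, (6) is obtained by combining (1) and (2): (1) gives $a_{jk} = a_{j0}$ while the intermediate step of (2) gives $a_{j0} = a_{00}$, so $a_{jk} = a_{00}$; and (7) follows from (6) together with (S), since $a_{jk} = a_{00}$ makes the total sum $qr\,a_{00} = 0$, forcing $a_{00} = 0$ and hence $a_{jk} = 0$.

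There is no substantive obstacle here: once the four hypotheses are correctly transcribed via Corollary~\ref{coro1}, each part is a one- or two-line manipulation. The only points needing care are bookkeeping---keeping the roles of $(q,j)$ and $(r,k)$ straight so that the symmetric pairs (1)/(2) and (4)/(5) are handled consistently---and observing that the divisions by $q$ and $r$ are legitimate precisely because we have passed to $\mathbb{C}$, where these nonzero integers are invertible.
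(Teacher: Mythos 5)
Your proposal is correct and follows essentially the same route as the paper: both transcribe the zero conditions via Corollary~\ref{coro1} into linear relations among the values $\chi_{p^i,p^n}(A_{jk})$ and then sum over $j$ or $k$ (e.g.\ the paper's proof of (1) is exactly your summation of (P) over $j$ combined with (Q), and its proof of (3) is your two-way evaluation of the total sum). The only difference is presentational: the paper proves (1) and (3) and declares the rest similar, while you carry out all seven cases explicitly.
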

\begin{proof}
  We will only prove (1) and (3). For other statements, the proofs are similar.

  (1). If $p^{i},p^{i}q\in\mathcal{Z}_{A}$, by Corollary~\ref{coro1}, we have
  \begin{align*}
    & \chi_{p^{i},p^n}(A_{jk}-A_{j0}-A_{0k}+A_{00})=0, \\
    & \chi_{p^{i},p^n}(\sum_{j=0}^{q-1}(A_{jk}-A_{j0}))=0.
  \end{align*}
  Then we can compute to get that
  \begin{align*}
  0&=\sum_{j=0}^{q-1}\chi_{p^{i},p^n}(A_{jk}-A_{j0}-A_{0k}+A_{00})\\
   &=\chi_{p^{i},p^n}(\sum_{j=0}^{q-1}(A_{jk}-A_{j0}-A_{0k}+A_{00}))\\
   &=\chi_{p^{i},p^n}(\sum_{j=0}^{q-1}(-A_{0k}+A_{00}))\\
   &=q\chi_{p^{i},p^n}(-A_{0k}+A_{00})\\
   &=q\chi_{p^{i},p^n}(-A_{jk}+A_{j0}).
  \end{align*}
  Hence $\chi_{p^{i},p^n}(A_{jk}-A_{j0})=0$ for all $j\in[0,q-1],k\in[0,r-1]$.

  (3) If $p^{i}q,p^{i}r\in\mathcal{Z}_{A}$, by Corollary~\ref{coro1}, we have
  \begin{align*}
    & \chi_{p^{i},p^n}(\sum_{j=0}^{q-1}(A_{jk}-A_{j0}))=0, \\
    & \chi_{p^{i},p^n}(\sum_{k=0}^{r-1}(A_{jk}-A_{0k}))=0.
  \end{align*}
  Then we can compute to get that
  \begin{align*}
  r\chi_{p^i,p^n}(\sum_{j=0}^{q-1}A_{jk})&=r\chi_{p^i,p^n}(\sum_{j=0}^{q-1}A_{j0})=\chi_{p^i,p^n}(\sum_{j=0}^{q-1}\sum_{k=0}^{r-1}A_{j0})=\chi_{p^i,p^n}(\sum_{j=0}^{q-1}\sum_{k=0}^{r-1}A_{jk})\\
  &=\chi_{p^i,p^n}(\sum_{j=0}^{q-1}\sum_{k=0}^{r-1}A_{0k})=q\chi_{p^i,p^n}(\sum_{k=0}^{r-1}A_{0k})=q\chi_{p^i,p^n}(\sum_{k=0}^{r-1}A_{jk}).
  \qedhere\end{align*}
\end{proof}

\section{Proof of Theorem~\ref{mainthm}}\label{section4}
Let $(A,B)$ be a nontrivial spectral pair in $\mathbb{Z}_{p^{n}qr}$. Assume that $A$ is not a tiling set, we will prove that there does not exist such spectral pair $(A,B)$.

Let $\mathbb{Z}_{p^{n}qr}=\langle a,b,c\rangle$, where $o(a)=p^{n}$, $o(b)=q$ and $o(c)=r$, and write $A=\sum_{j=0}^{q-1}\sum_{k=0}^{r-1}A_{jk}b^{j}c^{k}$ and $B=\sum_{j=0}^{q-1}\sum_{k=0}^{r-1}B_{jk}b^{j}c^{k}$, where $A_{jk},B_{jk}\in\mathbb{Z}_{\ge0}[\langle a\rangle]$. Let $e$ be the identity element of group $\mathbb{Z}_{p^{n}qr}$.

\begin{remark}
  \begin{enumerate}
    \item[(1)] If $a^{i_0},a^{i_0+up^{i_1}}\in A_{jk}$ for some $j\in[0,q-1],k\in[0,r-1]$ and $u\not\equiv0\pmod{p}$, then $p^{i_1}qr\in \mathcal{Z}_{B}$.
    \item[(2)] If $a^{i_0}\in A_{j_0k},a^{i_0+up^{i_1}}\in A_{j_1k}$ for some $j_0,j_1\in[0,q-1],k\in[0,r-1]$ with $j_0\ne j_1$, then $p^{i_1}r\in \mathcal{Z}_{B}$ when $u\not\equiv0\pmod{p}$, and $p^{n}r\in \mathcal{Z}_{B}$ when $u=0$.
    \item[(3)] If $a^{i_0}\in A_{jk_0},a^{i_0+up^{i_1}}\in A_{jk_1}$ for some $j\in[0,q-1],k_0,k_1\in[0,r-1]$ with $k_0\ne k_1$, then $p^{i_1}q\in \mathcal{Z}_{B}$ when $u\not\equiv0\pmod{p}$, and $p^{n}q\in \mathcal{Z}_{B}$ when $u=0$.
    \item[(4)] If $a^{i_0}\in A_{j_0k_0},a^{i_0+up^{i_1}}\in A_{j_1k_1}$ for some $j_0,j_1\in[0,q-1],k_0,k_1\in[0,r-1]$ with $j_0\ne j_1$ and $k_0\ne k_1$, then $p^{i_1}\in \mathcal{Z}_{B}$ when $u\not\equiv0\pmod{p}$, and $p^{n}\in \mathcal{Z}_{B}$ when $u=0$.
  \end{enumerate}
\end{remark}

Note that Fuglede's conjecture holds in $\mathbb{Z}_{p^{n}q}$ \cite{MK17}, $\mathbb{Z}_{pqr}$ \cite{S19} and $\mathbb{Z}_{p^{2}qr}$ \cite{Somlai21}, where $p,q,r$ are distinct primes. By Lemmas~\ref{lem3p1}, \ref{lem3p2}, \ref{lem3p3}, \ref{lem3p4} and \ref{lem3p5}, we also assume that
\begin{enumerate}
  \item [(1)] $e\in A$, $e\in B$;
  \item [(2)] $A$ generates group $\mathbb{Z}_{p^{n}qr}$;
  \item [(3)] $B$ generates group $\mathbb{Z}_{p^{n}qr}$;
  \item [(4)] $A$ is not a union of $\mathbb{Z}_{p}$- or $\mathbb{Z}_{q}$- or $\mathbb{Z}_{r}$-cosets exclusively.
\end{enumerate}

 Then $e\in A_{00}$ and $e\in B_{00}$. Denote
\begin{align*}
&I_{1}=\{i: i\in[0,n-1],p^{i}qr\in\mathcal{Z}_{A}\},\\
&I_{2}=\mathcal{Z}_{A}\cap \{p^{n}q,p^{n}r\},\\
&J_{1}=\{i: i\in[0,n-1],p^{i}qr\in\mathcal{Z}_{B}\},\\
&J_{2}=\mathcal{Z}_{B}\cap \{p^{n}q,p^{n}r\}.
\end{align*}
Then $0\le|I_1|,|J_1|\le n$ and $0\le|I_2|,|J_2|\le2$.
Now we first prove some lemmas.

\begin{lemma}\label{lem3p8}
\begin{enumerate}
  \item[(1)] If $q,r\in\mathcal{Z}_{A}$ and $qr\not\in\mathcal{Z}_{A}$, then $p^{n}q,p^{n}r\in\mathcal{Z}_{B}$.
  \item[(2)] If $q,r\in\mathcal{Z}_{B}$ and $qr\not\in\mathcal{Z}_{B}$, then $p^{n}q,p^{n}r\in\mathcal{Z}_{A}$.
\end{enumerate}
\end{lemma}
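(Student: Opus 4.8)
The plan is to prove part (1); part (2) is then immediate, since by Lemma~\ref{lem3p1} the pair $(B,A)$ is also spectral, and applying (1) to it gives (2). So fix a spectral pair $(A,B)$ with $q,r\in\mathcal{Z}_A$ and $qr\notin\mathcal{Z}_A$. First I would rewrite everything at the block level via Corollary~\ref{coro1}. Taking $i=0$, part (4) says that $qr\notin\mathcal{Z}_A$ is $\chi_{1,p^n}(\sum_{j,k}A_{jk})\ne 0$, while parts (2) and (3) say that $q,r\in\mathcal{Z}_A$ amount to $\chi_{1,p^n}(\sum_j A_{jk})$ being independent of $k$ and $\chi_{1,p^n}(\sum_k A_{jk})$ being independent of $j$. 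Taking $i=n$ and using $\chi_{p^n,p^n}(X)=|X|$, the two desired conclusions $p^nr,p^nq\in\mathcal{Z}_B$ become, by the same parts of Corollary~\ref{coro1} applied to $B$, the purely combinatorial statements that $\sum_k|B_{jk}|$ is independent of $j$ and $\sum_j|B_{jk}|$ is independent of $k$. Since the hypotheses are symmetric under interchanging the roles of $q$ and $r$ (equivalently, of the generators $b$ and $c$), it suffices to prove $p^nr\in\mathcal{Z}_B$.

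The next step uses $qr\notin\mathcal{Z}_A$ to constrain $B$. By Lemma~\ref{lem3p1}(c) we have $(B-B)\setminus\{0\}\subseteq\mathcal{Z}_A$, and I claim that no generator of $\langle a\rangle$ lies in $B-B$. Indeed, $\chi_{1,p^n}(\sum_{j,k}A_{jk})\ne0$, so by Lemma~\ref{lemma1} the coefficients of $\sum_{j,k}A_{jk}$ are not constant on the residue classes modulo $p^{n-1}$; as that condition is insensitive to the chosen primitive root, $\chi_{m,p^n}(\sum_{j,k}A_{jk})\ne 0$ for every $m$ with $p\nmid m$. If some $a^m$ with $p\nmid m$ were in $B-B$, then $a^m\in\mathcal{Z}_A$, and because the associated character is trivial on $b,c$ and primitive on $\langle a\rangle$ this would read $\chi_{m',p^n}(\sum_{j,k}A_{jk})=0$ with $p\nmid m'$, a contradiction. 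Hence any two exponents occurring in a single block $B_{jk}$ differ by a multiple of $p$, so each $B_{jk}$ lies in one coset of $\langle a^p\rangle$; in particular $|B_{jk}|\le p^{n-1}$ and $\sum_{j,k}|B_{jk}|=|B|=|A|$.

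I would then exploit the dual containment $(A-A)\setminus\{0\}\subseteq\mathcal{Z}_B$, read through the Remark. Suppose there are $j_0\ne j_1$, an index $k$, and an exponent $i_0$ with $a^{i_0}\in A_{j_0k}\cap A_{j_1k}$; then $a^{i_0}b^{j_0}c^k$ and $a^{i_0}b^{j_1}c^k$ both lie in $A$ and differ by an element of $\langle b\rangle\setminus\{e\}$, so part (2) of the Remark with $u=0$ yields $p^nr\in\mathcal{Z}_B$ at once. Thus the only remaining situation is the coincidence-free one: for every pair $(i_0,k)$ at most one $j$ has $a^{i_0}\in A_{jk}$, i.e.\ the quotient map $\mathbb{Z}_{p^nqr}\to\mathbb{Z}_{p^nqr}/\langle b\rangle$ is injective on $A$.

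This coincidence-free case is where I expect the real difficulty to lie. Here the set-difference containments no longer deliver $p^nr$ as a difference, so one cannot read off $p^nr\in\mathcal{Z}_B$ directly; the task becomes genuinely one of transferring the primitive-root (cyclotomic) balance carried by $q,r\in\mathcal{Z}_A$ at ``level $0$'' into the integer cardinality balance $\sum_k|B_{jk}|=\text{const}$ at ``level $n$'' on the $B$-side. My plan is to combine the three facts already assembled---the $\langle a^p\rangle$-coset structure of every block $B_{jk}$, the constancy of $\chi_{1,p^n}(\sum_j A_{jk})$ in $k$ and of $\chi_{1,p^n}(\sum_k A_{jk})$ in $j$, and $|A|=|B|$---together with the injectivity of $A$ in the coincidence-free case, to pin down how the blocks of $B$ split among the $\langle a^p\rangle$-cosets and thereby force $\sum_k|B_{jk}|$ to be independent of $j$. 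Concretely I would try to extract from the injectivity further group elements that cannot lie in $\mathcal{Z}_A$, hence cannot occur in $B-B$, and let these additional forbidden differences impose the equidistribution of $B$ over the cosets of $\langle a,c\rangle$; the group-ring bookkeeping of Corollary~\ref{coro1} and Lemma~\ref{lemma3} is what I would use to track the resulting constraints. Making this final transfer precise, rather than any of the preparatory reductions, is the crux of the argument.
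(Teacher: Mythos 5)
Your reductions are all sound---passing to $(B,A)$ for part (2), the $q\leftrightarrow r$ symmetry, and especially the observation that a single coincidence $a^{i_0}\in A_{j_0k}\cap A_{j_1k}$ with $j_0\ne j_1$ already yields $p^nr\in\mathcal{Z}_B$ via the Remark---and this last point is exactly the endgame of the paper's proof. But you stop short of the one step that actually carries the lemma: you never rule out the coincidence-free case, and you say yourself that this is where the crux lies. So the proposal has a genuine gap.

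The missing idea is the combination of Lemma~\ref{lemma3}(3) with Lemma~\ref{lemma1}. From $q,r\in\mathcal{Z}_A$, Lemma~\ref{lemma3}(3) gives $r\chi_{1,p^n}(\sum_{j}A_{jk})=q\chi_{1,p^n}(\sum_{k}A_{jk})$, and $qr\notin\mathcal{Z}_A$ makes this common value nonzero. Writing $\sum_j A_{jk}=\sum_i x_ia^i$ and $\sum_k A_{jk}=\sum_i y_ia^i$, Lemma~\ref{lemma1} applied to $\sum_i x_i\zeta_{p^n}^i\ne0$ produces $i_1\equiv i_2\pmod{p^{n-1}}$ with $x_{i_1}\ne x_{i_2}$, while the vanishing of $\sum_i(rx_i-qy_i)\zeta_{p^n}^i$ forces $r(x_{i_1}-x_{i_2})=q(y_{i_1}-y_{i_2})$. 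Hence $q\mid(x_{i_1}-x_{i_2})\ne0$, so $\max\{x_{i_1},x_{i_2}\}\ge q$, i.e.\ some $a^{i_0}$ occurs $q$ times in the multiset $\sum_{j}A_{jk}$ and therefore lies in two distinct blocks $A_{j_0k},A_{j_1k}$---the coincidence you needed. Your coincidence-free case is therefore vacuous, and your plan for it (forbidden differences in $B-B$, equidistribution of $B$ over cosets of $\langle a^p\rangle$) is aimed at the wrong object: the information that closes the argument lives in the multiplicities of $\sum_jA_{jk}$ on the $A$-side, not in the coset structure of $B$. Without this (or an equivalent) argument the proof is incomplete.
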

\begin{proof}
We will only prove the first statement, the proof of the second statement is similar. Note that $qr\not\in\mathcal{Z}_{A}$.
By Lemma~\ref{lemma3}, we have
\[r\chi_{1,p^n}(\sum_{j=0}^{q-1}A_{jk})=q\chi_{1,p^n}(\sum_{k=0}^{r-1}A_{jk})\ne0.\]
Let  $\sum_{j=0}^{q-1}A_{jk}=\sum_{i=0}^{p^{n}-1}x_{i}a^{i}$, and $\sum_{k=0}^{r-1}A_{jk}=\sum_{i=0}^{p^{n}-1}y_{i}a^{i}$, where $x_i,y_i\in\mathbb{Z}_{\ge0}$.
Then above inequations show that
\begin{align}
&\sum_{i=0}^{p^{n}-1}x_{i}\zeta_{p^n}^{i}\ne0,\label{eq1}\\
&\sum_{i=0}^{p^{n}-1}y_{i}\zeta_{p^n}^{i}\ne0,\label{eq2}\\
&\sum_{i=0}^{p^{n}-1}(rx_i-qy_i)\zeta_{p^n}^{i}=0.\label{eq3}
\end{align}
 By Lemma~\ref{lemma1}, Equation~(\ref{eq1}) implies that there exist $i_1,i_2$ with $i_1\equiv i_2\pmod{p^{n-1}}$ such that $x_{i_1}\ne x_{i_2}$. By Equation~(\ref{eq3}), we have $rx_{i_1}-qy_{i_1}=rx_{i_2}-qy_{i_2}$, which leads to $r(x_{i_1}-x_{i_2})=q(y_{i_1}-y_{i_2})$. Hence, we have $|x_{i_1}-x_{i_2}|\ge q$ and $|y_{i_1}-y_{i_2}|\ge r$. Therefore, $\max\{x_{i_1},x_{i_2}\}\ge q$ and $\max\{y_{i_1},y_{i_2}\}\ge r$. In other words, there exists $a^{i_0}\in\cup_{j=0}^{q-1}A_{jk}$ such that $a^{i_0}$ appears $q$ times in $\cup_{j=0}^{q-1}A_{jk}$. Hence $p^nr\in\mathcal{Z}_{B}$. Similarly, $p^{n}q\in\mathcal{Z}_{B}$.
\end{proof}

\begin{lemma}\label{lemma5}
  \begin{enumerate}
    \item [(1)] If $|J_{2}|\le1$, then $1\in\mathcal{Z}_{A}$.
    \item [(2)] If $|I_{2}|\le1$, then $1\in\mathcal{Z}_{B}$.
  \end{enumerate}
\end{lemma}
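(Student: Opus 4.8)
The plan is to prove part (2) from part (1) by the $A\leftrightarrow B$ symmetry of the setup: by Lemma~\ref{lem3p1} the pair $(B,A)$ is again a spectral pair satisfying the same standing assumptions, and under this exchange the set $\mathcal{Z}_{A}\cap\{p^{n}q,p^{n}r\}=I_{2}$ plays the role of $J_{2}$, so applying (1) to $(B,A)$ yields $1\in\mathcal{Z}_{B}$. Thus I would concentrate on (1), and prove it in contrapositive form: assuming $1\notin\mathcal{Z}_{A}$, I would show that \emph{both} $p^{n}q\in\mathcal{Z}_{B}$ and $p^{n}r\in\mathcal{Z}_{B}$, which forces $|J_{2}|=2$ and contradicts $|J_{2}|\le 1$.

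First I would translate each relevant condition into the language of Corollary~\ref{coro1}. Taking $i=0$ in part (1), the hypothesis $1\notin\mathcal{Z}_{A}$ means that the matrix $\alpha_{jk}:=\chi_{1,p^{n}}(A_{jk})$ fails the additive (second-difference) relation, i.e. there is a pair $(j,k)$ with $\chi_{1,p^{n}}(A_{jk}-A_{j0}-A_{0k}+A_{00})\neq 0$. On the target side, taking $i=n$ and using that $\chi_{p^{n},p^{n}}$ is the trivial character on $\langle a\rangle$ (so $\chi_{p^{n},p^{n}}(X)=|X|$), the conditions I want become purely combinatorial: $p^{n}q\in\mathcal{Z}_{B}$ is equivalent to $\sum_{j}|B_{jk}|$ being independent of $k$ (equidistribution of $B$ over the $r$-cosets), and $p^{n}r\in\mathcal{Z}_{B}$ to $\sum_{k}|B_{jk}|$ being independent of $j$. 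The natural route to produce these is through the Remark at the beginning of this section: its parts (3) and (2) in the $u=0$ case say that a repeated power $a^{i_{0}}$ occurring in two $A_{jk_{0}},A_{jk_{1}}$ (same $b$-level, different $c$-levels) yields $p^{n}q\in\mathcal{Z}_{B}$, while a repeated $a^{i_{0}}$ in $A_{j_{0}k},A_{j_{1}k}$ (same $c$-level, different $b$-levels) yields $p^{n}r\in\mathcal{Z}_{B}$. So the crux is to manufacture, from the single failure of additivity, one repeated power across a pair of $c$-levels \emph{and} one across a pair of $b$-levels.

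The hard part is precisely this coupling. Non-additivity of $(\alpha_{jk})$ does \emph{not} by itself force repeated powers: one can have the slices $\{A_{jk}\}_{k}$ pairwise disjoint in every row yet still have $\chi_{1,p^{n}}(A_{jk}-A_{j0}-A_{0k}+A_{00})\neq 0$, so the Remark alone cannot be the whole story. I therefore expect the argument to bring in the full spectral structure rather than just the Remark's one-directional implications. Concretely, I would use $|A|=|B|$ together with the inclusions $(A-A)\setminus\{e\}\subseteq\mathcal{Z}_{B}$ and $(B-B)\setminus\{e\}\subseteq\mathcal{Z}_{A}$ from Lemma~\ref{lem3p1}(c), the standing hypotheses that $A$ generates $\mathbb{Z}_{p^{n}qr}$ and is not a union of $\mathbb{Z}_{p}$-, $\mathbb{Z}_{q}$- or $\mathbb{Z}_{r}$-cosets, and a case analysis on which of the order-$p^{n}$ characters $q,r,qr$ lie in $\mathcal{Z}_{A}$. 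When $q,r\in\mathcal{Z}_{A}$ and $qr\notin\mathcal{Z}_{A}$, Lemma~\ref{lem3p8}(1) delivers $p^{n}q,p^{n}r\in\mathcal{Z}_{B}$ directly; the genuine work lies in the complementary cases, in particular $qr\in\mathcal{Z}_{A}$, where I would combine the multi-zero consequences of Lemma~\ref{lemma3}(3)--(5) with the equidistribution identities above and a multiplicity/counting argument in the spirit of the proof of Lemma~\ref{lem3p8} (using Lemma~\ref{lemma1} to locate coincident $a$-powers) to extract both required repeats. Reconciling these cases into a single clean dichotomy is where I anticipate the main difficulty.
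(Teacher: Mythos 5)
Your reductions are sound and they match the paper's opening moves: part (2) does follow from part (1) by exchanging $A$ and $B$ via Lemma~\ref{lem3p1}, the contrapositive of (1) is exactly ``$1\notin\mathcal{Z}_{A}\Rightarrow p^{n}q,p^{n}r\in\mathcal{Z}_{B}$'', and your translation of the relevant conditions through Corollary~\ref{coro1} and the Remark (equidistribution of the $|B_{jk}|$ over rows/columns; repeated $a$-powers across $b$- or $c$-levels) is correct. One step you gloss over: the paper does not run ``a case analysis on which of $q,r,qr$ lie in $\mathcal{Z}_{A}$'' --- it first \emph{forces} $q,r\in\mathcal{Z}_{A}$ by applying Lemma~\ref{lemma2} to the generating set $B$: a difference $t_{1}-t_{2}$ with $p\nmid(t_{1}-t_{2})$ and $q\nmid(t_{1}-t_{2})$ lies in $(B-B)\setminus\{e\}\subseteq\mathcal{Z}_{A}$ and has type $1$ or $r$, so $1\notin\mathcal{Z}_{A}$ yields $r\in\mathcal{Z}_{A}$, and symmetrically $q\in\mathcal{Z}_{A}$. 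Combined with Lemma~\ref{lem3p8} (which, as you note, settles the subcase $qr\notin\mathcal{Z}_{A}$), this leaves exactly one case, $qr\in\mathcal{Z}_{A}$ --- the case you defer.

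That deferred case is not a loose end but the entire proof: it occupies roughly two pages of the paper, and none of its content appears in your sketch. Moreover, your stated strategy for it --- manufacture one repeated $a$-power across two $c$-levels and one across two $b$-levels to land $p^{n}q$ and $p^{n}r$ in $\mathcal{Z}_{B}$ --- is not how that case resolves. There the paper shows the configuration is impossible outright: from $1\notin\mathcal{Z}_{A}$ and $q,r,qr\in\mathcal{Z}_{A}$ it extracts $p^{n-1}q,p^{n-1}r,p^{n}\in\mathcal{Z}_{B}$, proves the key claim $p^{n-1}\notin\mathcal{Z}_{B}$ (using that $B$ generates and that $|J_{2}|\le1$), and then splits on $p$ odd versus $p=2$. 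For odd $p$ a three-element pigeonhole argument forces $p^{n-1}\in\mathcal{Z}_{B}$ after all, a contradiction; for $p=2$ two further subcases require detailed bookkeeping of the block sizes $|B_{jk}|$, ending in the arithmetic contradictions $(qr-2q-2r)m=0$ and, after reducing to $(q,r)=(3,5)$, the divisibility clash $2^{|I_{1}|}\mid 2m$ versus $2^{|I_{1}|}\ge 8m$. None of this is routine ``counting in the spirit of Lemma~\ref{lem3p8}'', so as written the proposal has a genuine gap at its center, which you yourself flag as the anticipated main difficulty.
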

\begin{proof}
We will only prove the first statement, the proof of the second statement is similar.
Assume to the contrary, $1\not\in\mathcal{Z}_{A}$, by Lemma~\ref{lemma2}, $q,r\in\mathcal{Z}_{A}$. Then we have $qr\in\mathcal{Z}_{A}$ by  Lemma~\ref{lem3p8}. By Lemma~\ref{lemma3}, we have
\[\chi_{1,p^{n}}(\sum_{j=0}^{q-1}A_{jk})=\chi_{1,p^{n}}(\sum_{k=0}^{r-1}A_{jk})=0.\]
In other words, $\sum_{j=0}^{q-1}A_{jk}$ and $\sum_{k=0}^{r-1}A_{jk}$ are unions of some $p$-cycles. Since $1\notin\mathcal{Z}_{A}$, then there exist $j_1,k_1$ such that $\chi_{1,p^n}(A_{j_1k_1})\ne0$. Hence, there exists $a^{i_0}\in A_{j_1k_1}$ such that $a^{i_0+up^{n-1}}\notin A_{j_1k_1}$ for some $u\in[1,p-1]$. Moreover, $a^{i_0+up^{n-1}}\in A_{j_2k_1}$ and $a^{i_0+up^{n-1}}\in A_{j_1k_2}$ for some $j_2,k_2$ with $j_2\ne j_1$ and $k_2\ne k_1$.
This shows that $p^{n-1}q,p^{n-1}r,p^{n}\in\mathcal{Z}_{B}$. By Lemma~\ref{lemma3} and Corollary~\ref{coro1}, we have
\begin{align}
  & r\chi_{p^{n-1},p^n}(\sum_{j=0}^{q-1}B_{jk})=q\chi_{p^{n-1},p^n}(\sum_{k=0}^{r-1}B_{jk})\text{ for all }j\in[0,q-1],k\in[0,r-1],\label{eq4}\\
  & |B_{jk}|-|B_{j0}|-|B_{0k}|+|B_{00}|=0\text{ for all }j\in[0,q-1],k\in[0,r-1].\label{eq5}
\end{align}

{\bf{Claim:} $p^{n-1}\notin\mathcal{Z}_{B}$.}

Assume to the contrary, $p^{n-1}\in\mathcal{Z}_{B}$. If $p^{n-1}qr\in\mathcal{Z}_{B}$, by Lemma~\ref{lemma3}, we have $\chi_{p^{n-1},p^n}(B_{jk})=0$. Noting that $e\in B_{00}$, then
\[\{i\pmod{p}:\ a^{i}\in B_{00}\}=\{0,1,\dots,p-1\}.\]
 Since $1\not\in\mathcal{Z}_{A}$, then $B_{jk}=\emptyset$ for $j\in[1,q-1]$ and $k\in[1,r-1]$. If $B_{j0}\ne\emptyset$ for some $j\in[1,q-1]$, similarly as before,
 \begin{align*}
 &\{i\pmod{p}:\ a^{i}\in B_{j0}\}=\{0,1,\dots,p-1\},\\
 &B_{0k}=\emptyset\text{ for }k\in[1,r-1].
 \end{align*}
Thus $B=\sum_{j=0}^{q-1}B_{j0}b^{j}$, which contradicts to the fact that $B$ generates $\mathbb{Z}_{p^{n}qr}$. Similarly, if $B_{0k}\ne\emptyset$ for some $k\in[1,r-1]$, we can also get a contradiction.
Therefore, $p^{n-1}qr\not\in\mathcal{Z}_{B}$. By Lemma~\ref{lemma3}, we have
\[\chi_{p^{n-1},p^n}(B_{jk})=\chi_{p^{n-1},p^n}(B_{00})\ne0\text{ for all }j,k.\]
 Since $1\not\in\mathcal{Z}_{A}$, then $|\{i\pmod{p}:\ a^{i}\in B_{jk}\}|=1$ for all $j\in[0,q-1],k\in[0,r-1]$, and
\[\{i\pmod{p}:\ a^{i}\in B_{jk}\}=\{i\pmod{p}:\ a^{i}\in B_{00}\}.\]
Hence $|B_{jk}|=|B_{00}|$ for all $j\in[0,q-1],\ k\in[0,r-1]$. This shows that $p^{n}q,p^{n}r\in\mathcal{Z}_{B}$, which is a contradiction. This ends the proof of claim.

Now we divide our discussion into two cases.

{\bf{Case 1: $p$ is an odd prime.}}

Since $q,r,qr\in\mathcal{Z}_{A}$,  by Lemma~\ref{lemma3}, 
\[\chi_{1,p^n}(\sum_{j=0}^{q-1}A_{jk})=\chi_{1,p^n}(\sum_{k=0}^{r-1}A_{jk})=0\text{ for all }j,k.\]
In other words, $\sum_{j=0}^{q-1}A_{jk}$ and $\sum_{k=0}^{r-1}A_{jk}$ are unions of some $p$-cycles.
Note that $1\notin\mathcal{Z}_{A}$. There exist $j_1,k_1$ such that $\chi_{1,p^n}(A_{j_1k_1})\ne0$. Hence, there exists $a^{i_0}\in A_{j_1k_1}$ such that at least 2 of $a^{i_0+tp^{n-1}}$, $t=1,\dots,p-1$ do not belong to $A_{j_1k_1}$, say $a^{i_0+p^{n-1}}$ and $a^{i_0+2p^{n-1}}$ (if there are $p-1$ of $a^{i_0+tp^{n-1}}$, $t=0,\dots,p-1$ belong to $A_{j_1k_1}$ and the remaining one belong to $A_{j_1k_1^{'}}$, then change $A_{j_1k_1}$ to $A_{j_1k_1^{'}}$). Moreover,  $a^{i_0+p^{n-1}}\in A_{j_2k_1}$ and $a^{i_0+2p^{n-1}}\in A_{j_1k_2}$ for some $j_2,k_2$ with $j_2\ne j_1$ and $k_2\ne k_1$.
Therefore, $p^{n-1}\in\mathcal{Z}_{B}$, which is a contradiction.

{\bf{Case 2: $p=2$.}}

We divide our discussion into two subcases.

{\bf{Subcase 2.1: For all $j,k$, $|\{i\pmod{2}:\ a^{i}\in B_{jk}\}|\le1$.}}

{\bf{Claim: $B_{jk}=\emptyset$ for all $j\in[1,q-1],k\in[1,r-1]$.}}

Assume to the contrary, there exist $j_0\in[1,q-1]$, $k_0\in[1,r-1]$ such that $B_{j_0k_0}\ne\emptyset$.
 Note that $e\in B_{00}$ and $1\notin\mathcal{Z}_{A}$. We can get that
\begin{align*}
&\{i\pmod{2}:\ a^{i}\in B_{j_0k_0}\}=\{0\},\\
&1\notin\{i\pmod{2}:\ a^{i}\in \cup_{j\in[0,q-1],k\in[0,r-1]}B_{jk}\backslash(B_{j_00}\cup B_{0k_0})\}.
\end{align*}
Since $B$ generates $\mathbb{Z}_{p^{n}qr}$, then $1\in\{i\pmod{2}:\ a^{i}\in \cup_{j\in[0,q-1],k\in[0,r-1]}B_{jk}\}$. Hence $1\in\{i\pmod{2}:\ a^{i}\in B_{j_00}\cup B_{0k_0}\}$. If both $B_{j_00}$ and $B_{0k_0}$ are nonempty, then
 $\{i\pmod{2}:\ a^{i}\in B_{0k_0}\}=\{i\pmod{2}:\ a^{i}\in B_{j_00}\}=\{1\}$ and
\[B_{jk}=\emptyset\text{ for all }(j,k)\ne(0,0),(j_0,k_0),(j_0,0),(0,k_0).\]
For any $j_1\ne j_0$, $k_1\ne k_0$, by Equation~(\ref{eq5}), we have $|B_{j_1k_1}|-|B_{j_10}|-|B_{0k_1}|+|B_{00}|=0$. Then $|B_{00}|=0$, which is a contradiction. If only one of $B_{j_00}$ and $B_{0k_0}$ is nonempty, say $B_{0k_0}$, then $\{i\pmod{2}:\ a^{i}\in B_{0k_0}\}=\{1\}$, $B_{j_00}=\emptyset$ and
\[\{i\pmod{2}:\ a^{i}\in \cup_{j\in[1,q-1]}B_{jk_0}\}=\{i\pmod{2}:\ a^{i}\in \cup_{k\in[0,r-1]\backslash\{k_0\}}B_{0k}\}=\{0\}.\]
  By Equation~(\ref{eq5}), we have
\begin{align*}
&|B_{0k_1}|=|B_{0k_2}|\text{ for all }k_1,k_2\in[0,r-1]\backslash\{k_0\},\\
&|B_{j_1k_0}|=|B_{j_2k_0}|\text{ for all }j_1,j_2\in[1,q-1],\\
&|B_{0k_0}|=|B_{0k_1}|+|B_{j_1k_0}|\text{ for all }k_1\in[0,r-1]\backslash\{k_0\},j_1\in[1,q-1].
\end{align*}
Since $p^{n-1}q,p^{n-1}r\in\mathcal{Z}_{B}$, by Lemma~\ref{lemma3},
\begin{align*}
&\chi_{p^{n-1},p^n}(\sum_{j=0}^{q-1}(B_{jk_0}-B_{j0}))=0,\\
&\chi_{p^{n-1},p^n}(\sum_{k=0}^{r-1}(B_{j_0k}-B_{0k}))=0.
\end{align*}
From above equations, we can get
 \begin{align*}
&|B_{00}|=\sum_{j=1}^{q-1}|B_{jk_0}|-|B_{0k_0}|=(q-1)|B_{j_0k_0}|-|B_{0k_0}|,\\
&|B_{j_0k_0}|=\sum_{k\in[0,r-1]\backslash\{k_0\}}|B_{0k}|-|B_{0k_0}|=(r-1)|B_{00}|-|B_{0k_0}|,
\end{align*}
which contradicts to $|B_{0k_0}|=|B_{00}|+|B_{j_0k_0}|$. This ends the proof of claim.

 Since $B$ generates $\mathbb{Z}_{p^{n}qr}$, then $\cup_{j=1}^{q-1}B_{j0}\ne\emptyset$ and $\cup_{k=1}^{r-1} B_{0k}\ne\emptyset$. Note that $1\notin\mathcal{Z}_{A}$. We can get that
\[\{i\pmod{2}:\ a^{i}\in \cup_{j=1}^{q-1}B_{j0}\}=\{i\pmod{2}:\ a^{i}\in \cup_{k=1}^{r-1}B_{0k}\}=\{1\}.\]
 By Equation~(\ref{eq5}), we have
   \[|B_{00}|=|B_{0k}|+|B_{j0}|\text{ for }j\in[1,q-1],k\in[1,r-1],\] which leads to
 \begin{align*}
&|B_{0k_1}|=|B_{0k_2}|\text{ for }k_1,k_2\in[1,r-1],\\
&|B_{j_10}|=|B_{j_20}|\text{ for }j_1,j_2\in[1,q-1].
\end{align*}
Since $p^{n-1}q,p^{n-1}r\in\mathcal{Z}_{B}$, by Lemma~\ref{lemma3},
\begin{align*}
&\chi_{p^{n-1},p^n}(\sum_{j=0}^{q-1}(B_{jk}-B_{j0}))=0,\\
&\chi_{p^{n-1},p^n}(\sum_{k=0}^{r-1}(B_{jk}-B_{0k}))=0.
\end{align*}
In other words,
\begin{align}
&|B_{00}|-(q-1)|B_{10}|=|B_{00}|-\sum_{j=1}^{q-1}|B_{j0}|=-\sum_{j=0}^{q-1}|B_{j1}|=-|B_{01}|,\label{eq10}\\
&|B_{00}|-(r-1)|B_{01}|=|B_{00}|-\sum_{k=1}^{r-1}|B_{0k}|=-\sum_{k=0}^{r-1}|B_{1k}|=-|B_{10}|.\label{eq11}
\end{align}
Combining above two equations, we have $q|B_{10}|=r|B_{01}|$. Assume that $|B_{10}|=rm$ for some $m\in\mathbb{Z}_{>0}$, then $|B_{01}|=qm$ and $|B_{00}|=(q+r)m$. By Equation~(\ref{eq10}), we have $(q+r)m-(q-1)rm=-qm$, that is $(qr-2q-2r)m=0$, which contradicts to $2\nmid qr$.

{\bf{Subcase 2.2: There exist $j,k$ such that $\{i\pmod{2}:\ a^{i}\in B_{jk}\}=\{0,1\}$.}}

WLOG, assume that $\{i\pmod{2}:\ a^{i}\in B_{00}\}=\{0,1\}$. Since $1\not\in\mathcal{Z}_{A}$, then
\[B_{jk}=\emptyset\text{ for all }j\in[1,q-1],k\in[1,r-1].\]
 Since $B$ generates $\mathbb{Z}_{p^{n}qr}$, then $\cup_{j=1}^{q-1}B_{j0}\ne\emptyset$ and $\cup_{k=1}^{r-1} B_{0k}\ne\emptyset$. Note that $1\not\in\mathcal{Z}_{A}$. We can get that
 \begin{align*}
&|\{i\pmod{2}:\ a^{i}\in \cup_{j=1}^{q-1}B_{j0}\}|=|\{i\pmod{2}:\ a^{i}\in \cup_{k=1}^{r-1}B_{0k}\}|=1,\\
&\{i\pmod{2}:\ a^{i}\in \cup_{j=1}^{q-1}B_{j0}\}=\{i\pmod{2}:\ a^{i}\in \cup_{k=1}^{r-1}B_{0k}\}.
\end{align*}
   WLOG, assume that $\{i\pmod{2}:\ a^{i}\in \cup_{j=1}^{q-1}B_{j0}\}=\{i\pmod{2}:\ a^{i}\in \cup_{k=1}^{r-1}B_{0k}\}=\{0\}$. By Equation~(\ref{eq5}),
   \[|B_{00}|=|B_{0k}|+|B_{j0}|\text{ for }j\in[1,q-1],k\in[1,r-1],\] which leads to
 \begin{align*}
&|B_{0k_1}|=|B_{0k_2}|\text{ for }k_1,k_2\in[1,r-1],\\
&|B_{j_10}|=|B_{j_20}|\text{ for }j_1,j_2\in[1,q-1].
\end{align*}
Since $p^{n-1}q,p^{n-1}r\in\mathcal{Z}_{B}$, by Lemma~\ref{lemma3},
\begin{align*}
&\chi_{p^{n-1},p^n}(\sum_{j=0}^{q-1}(B_{jk}-B_{j0}))=0,\\
&\chi_{p^{n-1},p^n}(\sum_{k=0}^{r-1}(B_{jk}-B_{0k}))=0.
\end{align*}
Let $u=|\{b\in B_{00}: b\pmod{2}=0\}|$ and $v=|\{b\in B_{00}: b\pmod{2}=1\}|$, then we have
\begin{align}
&u+v=|B_{00}|=|B_{10}|+|B_{01}|,\label{eq8}\\
&(u-v)+(q-1)|B_{10}|=\chi_{p^{n-1},p^n}(B_{00})+\sum_{j=1}^{q-1}|B_{j0}|=\sum_{j=0}^{q-1}|B_{j1}|=|B_{01}|,\label{eq6}\\
&(u-v)+(r-1)|B_{01}|=\chi_{p^{n-1},p^n}(B_{00})+\sum_{k=1}^{r-1}|B_{0k}|=\sum_{k=0}^{r-1}|B_{1k}|=|B_{10}|.\label{eq7}
\end{align}
By Equations~(\ref{eq6}) and (\ref{eq7}), we have $r|B_{01}|=q|B_{10}|$. Assume that $|B_{10}|=rm$ for some $m\in\mathbb{Z}_{>0}$, then $|B_{01}|=qm$ and $|B_{00}|=(q+r)m$. By Equations~(\ref{eq8}) and (\ref{eq6}), we get
\begin{align*}
&u-v=(q+r-qr)m,\\
&u+v=(q+r)m.
\end{align*}
Combining above two equations, we obtain $2u=(2q+2r-qr)m\ge0$. Since $q,r$ are distinct odd primes, then $(q,r)=(3,5)$ or $(5,3)$. WLOG, assume that $q=3$ and $r=5$. Then we can get that $|B_{00}|=8m$, $|B_{j0}|=5m$, $|B_{0k}|=3m$ and $|A|=|B|=30m$. By the pigeonhole principle, we have $|I_{1}|\ge\log_{2}(8m)$, that is $2^{|I_1|}\ge 8m$. On the other hand, by the definition of $I_1$, we have $2^{|I_1|}\mid |A|$, and then $2^{|I_1|}\mid 2m$, which is a contradiction.
\end{proof}

\begin{lemma}\label{lemma4}
  $|J_2|\ge1.$
\end{lemma}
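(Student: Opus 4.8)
The plan is to argue by contradiction: suppose $|J_{2}|=0$, so that neither $p^{n}q$ nor $p^{n}r$ lies in $\mathcal{Z}_{B}$. Since $0\le 1$, Lemma~\ref{lemma5}(1) immediately yields $1\in\mathcal{Z}_{A}$, so the whole difficulty is to show that ``$1\in\mathcal{Z}_{A}$ together with $p^{n}q,p^{n}r\notin\mathcal{Z}_{B}$'' is impossible. (The complementary case $1\notin\mathcal{Z}_{A}$ needs nothing new: the contrapositive of Lemma~\ref{lemma5}(1) already forces $|J_{2}|\ge 2$.)

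First I would turn the hypothesis $|J_{2}|=0$ into a combinatorial restriction on $A$ using the Remark. If some $a^{i_{0}}$ occurred in two blocks $A_{j_{0}k}$ and $A_{j_{1}k}$ in the same $c$-layer ($k$ fixed) with $j_{0}\ne j_{1}$, then the case $u=0$ of part (2) of the Remark would give $p^{n}r\in\mathcal{Z}_{B}$; likewise a repetition $a^{i_{0}}\in A_{jk_{0}}\cap A_{jk_{1}}$ with $k_{0}\ne k_{1}$ would give $p^{n}q\in\mathcal{Z}_{B}$ by part (3). Hence $|J_{2}|=0$ forces, for every exponent $i_{0}$, that the set of cells $\{(j,k):a^{i_{0}}\in A_{jk}\}$ meets each row and each column of the $q\times r$ array $(A_{jk})$ at most once; that is, every $\langle a\rangle$-fibre of $A$ sits in a partial-permutation pattern, and in particular the multiplicity of each exponent $i$ in the projection $\sum_{j,k}A_{jk}$ is at most $\min(q,r)$. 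Note also $e=a^{0}$ occupies only the cell $(0,0)$ in row $0$ and column $0$.

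Next I would feed in $1\in\mathcal{Z}_{A}$. By Corollary~\ref{coro1}(1) this means $\chi_{1,p^{n}}(A_{jk}-A_{j0}-A_{0k}+A_{00})=0$ for all $j,k$, which by Lemma~\ref{lemma1} says that the integer coefficient vector of $A_{jk}-A_{j0}-A_{0k}+A_{00}$ is constant on each residue class modulo $p^{n-1}$. I would then split on $|I_{2}|$. If $|I_{2}|\le 1$, Lemma~\ref{lemma5}(2) also gives $1\in\mathcal{Z}_{B}$, and since then at least one of $p^{n}q,p^{n}r$ is \emph{not} in $\mathcal{Z}_{A}$, the Remark applied to $B$ forbids the corresponding repetition of a common $\langle a\rangle$-exponent within a single layer of $B$; I would play these two partial-permutation structures against $|A|=|B|$ and the requirement that both $A$ and $B$ generate $\mathbb{Z}_{p^{n}qr}$. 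If instead $|I_{2}|=2$, then $p^{n}q,p^{n}r\in\mathcal{Z}_{A}$, so by Corollary~\ref{coro1}(2),(3) and Lemma~\ref{lemma3}(3) the row sums $\sum_{k}|A_{jk}|$ and column sums $\sum_{j}|A_{jk}|$ are each constant and satisfy $r\sum_{j}|A_{jk}|=q\sum_{k}|A_{jk}|$; combined with the partial-permutation bound and the constancy-mod-$p^{n-1}$ statement, these identities should over-determine the block sizes.

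The main obstacle, exactly as in Lemma~\ref{lemma5}, is the closing count: converting the partial-permutation structure plus $1\in\mathcal{Z}_{A}$ into a numerical identity for the block sizes $|A_{jk}|$ (or $|B_{jk}|$) that is incompatible with $|A|=|B|$ and with $A,B$ generating the whole group. I expect the even prime to be the delicate point --- just as $p=2$ demanded its own Subcases 2.1 and 2.2 in Lemma~\ref{lemma5} --- since there the pigeonhole/divisibility margin against $2^{|I_{1}|}\mid|A|$ is tightest; Lemma~\ref{lemma7} may be needed to pin the relevant fibres down to full $p$-power cosets before the count can be closed.
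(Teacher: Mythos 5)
Your setup is the correct one --- assume $|J_{2}|=0$, invoke Lemma~\ref{lemma5}(1) to get $1\in\mathcal{Z}_{A}$, and use the Remark to deduce that no power of $a$ can occur twice within a single row or a single column of the array $(A_{jk})$ --- and up to that point you match the paper. But the proof stops exactly where it must close, and the route you sketch for closing it (a case split on $|I_{2}|$, block-size identities, special treatment of $p=2$, a possible appeal to Lemma~\ref{lemma7}) is neither needed nor shown to work; phrases like ``should over-determine the block sizes'' and ``the main obstacle \dots is the closing count'' concede that the contradiction has not actually been derived.

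The missing step is short and purely local. The row/column disjointness you established says precisely that in $A_{jk}-A_{j0}-A_{0k}+A_{00}$ the positive part $A_{jk}+A_{00}$ and the negative part $A_{j0}+A_{0k}$ have disjoint supports; since by Lemma~\ref{lemma1} the coefficient vector of the difference is constant on residue classes modulo $p^{n-1}$, each class is carried entirely by one part, so each part vanishes separately under $\chi_{1,p^{n}}$, giving $\chi_{1,p^{n}}(A_{jk}+A_{00})=0$ for all $j,k$. If some $\chi_{1,p^{n}}(A_{jk})\ne0$, then $\chi_{1,p^{n}}(A_{00})\ne0$, so there is $a^{i_{0}}\in A_{00}$ with $a^{i_{0}+tp^{n-1}}\notin A_{00}$ for some $t$, and the mod-$p^{n-1}$ constancy of both $A_{jk}+A_{00}$ and $A_{jk'}+A_{00}$ forces $a^{i_{0}+tp^{n-1}}$ into $A_{jk}$ and into $A_{jk'}$ for some $k'\ne k$ --- a repetition inside row $j$ that yields $p^{n}q\in\mathcal{Z}_{B}$, contradicting $|J_{2}|=0$. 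Hence $\chi_{1,p^{n}}(A_{jk})=0$ for every $j,k$, i.e.\ $A$ is a union of $\mathbb{Z}_{p}$-cosets, which contradicts standing assumption (4). Note that this final contradiction comes from assumption (4), which your draft never invokes; the quantities you propose to exploit instead ($|A|=|B|$, the generating hypotheses, the value of $|I_{2}|$) play no role in this lemma.
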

\begin{proof}
If $|J_2|=0$, then $1\in\mathcal{Z}_{A}$ by Lemma~\ref{lemma5}.
By Corollary~\ref{coro1}, we have
\[\chi_{1,p^n}(A_{jk}-A_{j0}-A_{0k}+A_{00})=0\text{ for all }j,k.\]
 Since $p^{n}q,p^{n}r\not\in\mathcal{Z}_{B}$, then
 \[(A_{jk}\cup A_{00})\cap(A_{j0}\cup A_{0k})=\emptyset.\]
  Thus $\chi_{1,p^n}(A_{jk}+A_{00})=0$. If there exist $j,k$ such that $\chi_{1,p^n}(A_{jk})\ne0$, then $\chi_{1,p^n}(A_{00})\ne0$. Hence, there exists $a^{i_0}\in A_{00}$, but $a^{i_0+tp^{n-1}}\notin A_{00}$ for some $t\in[1,p-1]$, then $a^{i_0+tp^{n-1}}\in A_{jk}$. Similarly, we can get that $a^{i_0+tp^{n-1}}\in A_{jk'}$ for some $k'\ne k$.
Hence, $p^{n}q\in\mathcal{Z}_{B}$, which is a contradiction. Therefore, $\chi_{1}(A_{jk})=0$ for all $j\in[0,q-1],k\in[0,r-1]$. This shows that $A$ is a union of $\mathbb{Z}_{p}$-cosets, which is also a contradiction.
\end{proof}

\begin{lemma}\label{lemma6}
  \begin{enumerate}
    \item[(1)] $p^{n}r\in\mathcal{Z}_{A}$ or $p^{n}r\in\mathcal{Z}_{B}$;
    \item[(2)] $p^{n}q\in\mathcal{Z}_{A}$ or $p^{n}q\in\mathcal{Z}_{B}$.
  \end{enumerate}
\end{lemma}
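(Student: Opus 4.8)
The plan is to prove the two items separately. Since interchanging $b$ and $c$ (equivalently $q$ and $r$) is a symmetry of the entire setup---none of the standing assumptions (1)--(4) distinguishes $q$ from $r$---it suffices to prove (1), namely $p^{n}r\in\mathcal{Z}_{A}\cup\mathcal{Z}_{B}$. I would argue by contradiction and suppose $p^{n}r\notin\mathcal{Z}_{A}$ and $p^{n}r\notin\mathcal{Z}_{B}$. Then $I_{2}=\mathcal{Z}_{A}\cap\{p^{n}q,p^{n}r\}$ and $J_{2}=\mathcal{Z}_{B}\cap\{p^{n}q,p^{n}r\}$ can each contain at most $p^{n}q$, so $|I_{2}|\le1$ and $|J_{2}|\le1$. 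Lemma~\ref{lemma5} then applies in both directions and gives $1\in\mathcal{Z}_{A}$ and $1\in\mathcal{Z}_{B}$ simultaneously; moreover Lemma~\ref{lemma4} forces $J_{2}=\{p^{n}q\}$, so that $p^{n}q\in\mathcal{Z}_{B}$ as well. Thus the contradiction hypothesis supplies a large family of simultaneous vanishing conditions on $A$ and $B$.

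Next I would convert these conditions into combinatorial information about the layers $A_{jk}$ and $B_{jk}$. By Corollary~\ref{coro1}(3) with $i=n$, the assumptions $p^{n}r\notin\mathcal{Z}_{A}$ and $p^{n}r\notin\mathcal{Z}_{B}$ say precisely that the $b$-row sums $\sum_{k}|A_{jk}|$ and $\sum_{k}|B_{jk}|$ are each non-constant in $j$. Applying the Remark, part~(2), with $u=0$: since $p^{n}r\notin\mathcal{Z}_{B}$, no group element $a^{i_{0}}$ can occur in two distinct $b$-layers $A_{j_{0}k},A_{j_{1}k}$ of the same $c$-layer, so for every fixed $k$ the $\langle a\rangle$-supports of $A_{0k},\dots,A_{q-1,k}$ are pairwise disjoint; the analogous Remark for the spectral pair $(B,A)$, together with $p^{n}r\notin\mathcal{Z}_{A}$, gives the same disjointness for the layers of $B$. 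In particular each column total $\sum_{j}|A_{jk}|$ is at most $p^{n}$, and likewise for $B$. Finally, $1\in\mathcal{Z}_{A}$ and $1\in\mathcal{Z}_{B}$ become, via Corollary~\ref{coro1}(1) with $i=0$, the separability identities $\chi_{1,p^{n}}(A_{jk})=\chi_{1,p^{n}}(A_{j0})+\chi_{1,p^{n}}(A_{0k})-\chi_{1,p^{n}}(A_{00})$ and its analogue for $B$.

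With this structure assembled, I would derive the contradiction by a counting argument played against the identity $|A|=|B|$, modelled on the $p=2$ analysis in the proof of Lemma~\ref{lemma5}. The separability identities from $1\in\mathcal{Z}_{A}$ and $p^{n}q\in\mathcal{Z}_{B}$, combined with the per-column disjointness bound, should force the sizes $|A_{jk}|$ and $|B_{jk}|$ into a rigid pattern in which the individual $b$-rows are governed by a single parameter $m$; comparing the two sets through $|A|=|B|$ while insisting, as we must, that the $b$-row sums remain non-constant should then yield a relation of the form $(\,\cdots\,)m=0$ whose coefficient is coprime to $p$, forcing $m=0$ and contradicting nontriviality, exactly as Equations~(\ref{eq6})--(\ref{eq8}) do in Lemma~\ref{lemma5}. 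I expect this last numerical step to be the main obstacle: one has to arrange the bookkeeping so that the forced non-constancy of the $b$-rows collides with the divisibility extracted from $1\in\mathcal{Z}_{A}\cap\mathcal{Z}_{B}$, and---as in the earlier lemmas---the cases $p$ odd and $p=2$ will most likely have to be separated, since the arithmetic obstruction that closes the argument is parity-sensitive.
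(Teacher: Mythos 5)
Your opening moves coincide with the paper's: assume $p^{n}r\notin\mathcal{Z}_{A}\cup\mathcal{Z}_{B}$, deduce $|I_{2}|,|J_{2}|\le1$, invoke Lemmas~\ref{lemma5} and~\ref{lemma4} to get $1\in\mathcal{Z}_{A}$, $1\in\mathcal{Z}_{B}$ and $p^{n}q\in\mathcal{Z}_{B}$, and read off the disjointness of the $\langle a\rangle$-supports of the layers from the Remark. That part is correct. But the third paragraph, which is where the entire difficulty of this lemma lives, is not a proof: it is a statement of hope that ``the sizes should be forced into a rigid pattern'' yielding a relation $(\cdots)m=0$. No such single numerical obstruction exists here, and the paper's proof does not end that way. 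In the hardest configuration the counting is perfectly consistent ($|A|=|B|=p^{t}r$, all nonempty $|B_{jk}|$ equal to $p^{t}$), and the contradiction obtained is structural, not arithmetic: one shows $A$ would have to be a union of $\mathbb{Z}_{r}$-cosets, violating standing assumption (4).

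Concretely, the missing content is the following chain, none of which appears in your sketch. First one must establish $r\notin\mathcal{Z}_{A}$, then the claims $p^{n-1}qr\in\mathcal{Z}_{B}$ and $qr\in\mathcal{Z}_{A}$ (the latter via a sum of $p$-th roots of unity being nonzero since $p\nmid r$). The condition $r\notin\mathcal{Z}_{A}$ then forces a block structure on the nonempty $B_{jk}$ governed by the residues mod $p$ of their supports, and the proof splits into three cases according to where the block boundary $s_{u}$ falls. In the case $s_{u}=r$ one needs the pigeonhole argument to get $m=1$ and $|I_{1}|=t$, then Lemma~\ref{lemma7} to identify each $B_{jk}$ explicitly as a coset-like digit set, the equivalence $p^{i}\in\mathcal{Z}_{B}\Leftrightarrow p^{i}qr\in\mathcal{Z}_{B}$, and finally the diagonal sums $\sum_{j}A_{j,k+j}$ together with a second application of Lemma~\ref{lemma7} to conclude $A_{jk}=A_{j0}$ for all $k$. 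Without this structural analysis --- in particular without Lemma~\ref{lemma7} and the diagonal-sum step --- the argument does not close, so the proposal has a genuine gap at its core.
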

\begin{proof}
We will only prove the first statement, the proof of the second statement is similar.
Assume to the contrary, $p^{n}r\notin\mathcal{Z}_{A}$ and $p^{n}r\notin\mathcal{Z}_{B}$. By Lemmas~\ref{lemma5} and \ref{lemma4}, $1\in\mathcal{Z}_{A}$ and $1,p^{n}q\in\mathcal{Z}_{B}$. Then $r\mid|A|$, we may assume that $|A|=p^{t}rm$, where $\gcd(p,m)=1$.

If $r\in\mathcal{Z}_{A}$, by Lemma~\ref{lemma3}, $\chi_{1,p^n}(A_{jk}-A_{0k})=0$. Since $p^{n}r\notin\mathcal{Z}_{B}$, then $A_{jk}\cap A_{0k}=\emptyset$. Hence, $\chi_{1,p^n}(A_{jk})=0$. This shows that $A$ is a union of $\mathbb{Z}_{p}$-cosets, which is a contradiction. Therefore $r\not\in\mathcal{Z}_{A}$.

{\bf{Claim: $p^{n-1}qr\in\mathcal{Z}_{B}$.}}

Assume to the contrary, $p^{n-1}qr\not\in\mathcal{Z}_{B}$. Since $1\in\mathcal{Z}_{A}$, by Corollary~\ref{coro1},
\[\chi_{1,p^n}(A_{jk}-A_{j0}-A_{0k}+A_{00})=0.\]
 Then for any $a^{i_0}\in A_{00}$, we have $a^{i_0+up^{n-1}}\notin A_{00}$ for $u\in[1,p-1]$. Note that $p^{n}r\notin\mathcal{Z}_{B}$. We can get that $A_{j0}\cap A_{00}=\emptyset$, and then $a^{i_0}\notin A_{j0}$. If $a^{i_0}\notin A_{0k}$, then $a^{i_0+up^{n-1}}\in A_{jk}$.
 Considering $\chi_{1,p^n}(A_{j'k}-A_{j'0}-A_{0k}+A_{00})=0$ for some $j'\ne j$, similarly as before, we can get $a^{i_0+up^{n-1}}\in A_{j'k}$. This shows that $p^{n}r\in\mathcal{Z}_{B}$, which is a contradiction. Hence, $a^{i_0}\in A_{0k}$, and then
 $A_{0k}-A_{00}=0$. Therefore, $A_{jk}=A_{j0}$ for all $j,k$, and then $A=\sum_{j=0}^{q-1}A_{j0}b^{j}\sum_{k=0}^{r-1}c^{k}$, which contradicts to the fact that $A$ is not a union of $\mathbb{Z}_{r}$-cosets. This ends the proof of claim.

{\bf{Claim: $qr\in\mathcal{Z}_{A}$.}}

Assume to the contrary, $qr\not\in\mathcal{Z}_{A}$. Since $p^{n-1}qr,p^{n}q\in\mathcal{Z}_{B}$, by Lemma~\ref{lemma3}, we have
\begin{align*}
&\chi_{p^{n-1},p^n}(\sum_{j=0}^{q-1}\sum_{k=0}^{r-1}B_{jk})=0,\\
&\sum_{j=0}^{q-1}|B_{jk}|=p^{t}m.
\end{align*}
Note that $r,qr\not\in\mathcal{Z}_{A}$. We have $\{i\pmod{p}:\ a^{i}\in \cup_{j=0}^{q-1}B_{jk}\}=\{i_{k}\}$ for some $i_{k}\in[0,p-1]$. Then we can compute to get that
\begin{align*}
0&=\chi_{p^{n-1},p^n}(\sum_{j=0}^{q-1}\sum_{k=0}^{r-1}B_{jk})\\
 &=\sum_{k=0}^{r-1}\chi_{p^{n-1},p^n}(\sum_{j=0}^{q-1}B_{jk})\\
 &=\sum_{k=0}^{r-1}\sum_{j=0}^{q-1}|B_{jk}|e^{\frac{2\pi i\cdot i_{k}}{p}}\\
 &=p^{t}m\sum_{k=0}^{r-1}e^{\frac{2\pi i\cdot i_{k}}{p}},
\end{align*}
which contradicts to $p\nmid r$. This ends the proof of claim.

Since $r\notin\mathcal{Z}_{A}$, WLOG, the nonempty sets of $B_{jk}$ are as follows (after permuting the rows and columns of $(B_{jk})_{j\in[0,q-1],k\in[0,r-1]}$)
 {\footnotesize{
\[
   \begin{array}{cccccccccccccc}
     B_{00}&\cdots&B_{0,s_{0}-1} &   & & &  & B_{0,s_{u}}  &  \cdots & B_{0,s_{u+1}-1}  & \cdots  & B_{0,s_{u+p-1}}  &   & B_{0,s_{u+p}-1}  \\
       &   & & \ddots & & &  & \vdots  & \vdots  & \vdots  & \cdots  & \vdots  & \vdots  & \vdots  \\
       &   & &   & B_{u,s_{u-1}}&\cdots&B_{u,s_{u}-1} & \cdots  & \cdots  & \cdots  & \cdots  &  \cdots &  \cdots & \cdots  \\
       &   &&   &  & & & \vdots & \vdots & \vdots &  \vdots & \vdots  & \vdots  &  \vdots \\
       &   & &   & & &  & B_{q-1,s_{u}} & \cdots & B_{q-1,s_{u+1}-1}  & \cdots  & B_{q-1,s_{u+p-1}}  &   & B_{q-1,s_{u+p}-1}
   \end{array},
 \]}}
where $0=:s_{-1}\le s_{0}\le s_{1}\le\cdots\le s_{u+p}:=r$, $|\{i\pmod{p}:\ a^{i}\in B_{jk}\}|\ge2$ for $j\in[0,u]$, $k\in[s_{j-1},s_{j}-1]$, and $\{i\pmod{p}:\ a^{i}\in \cup_{j=0}^{q-1}B_{jk}\}=i$ for $i\in[0,p-1]$, $k\in[s_{u+i},s_{u+i+1}-1]$. Since $p^{n}q\in\mathcal{Z}_{B}$, we have
  \begin{align*}
  &|B_{jk}|=p^{t}m \text{ for }j\in[0,u],k\in[s_{j-1},s_{j}-1],\\
  &\sum_{j=0}^{q-1}|B_{jk}|=p^{t}m\text{ for }k\in[s_{u},s_{u+p}-1].
  \end{align*}

  {\bf{Case 1}: $0< s_{u}<r$.}

For this case, $|I_1|\le t$. By the pigeonhole principle, we have $|I_1|\ge \log_{p}(p^{t}m)$. Then $m=1$.
Note that $1,qr\in\mathcal{Z}_{A}$. If $q\in\mathcal{Z}_{A}$, by Lemma~\ref{lemma3},
\begin{align*}
&\chi_{1,p^n}(A_{jk}-A_{j0})=0,\\
&\chi_{1,p^n}(\sum_{j=0}^{q-1}A_{jk})=0.
\end{align*}
Since $r\notin\mathcal{Z}_{A}$, then there exists $j,k$ such that $\chi_{1,p^n}(A_{jk})\ne0$. Hence, there exists $a^{i_0}\in A_{jk}$ but $a^{i_0+up^{n-1}}\notin A_{jk}$ for some $u\in[1,p-1]$. Moreover, $a^{i_0}\in A_{j0}$ and $a^{i_0+up^{n-1}}\in A_{j'k}$ for some $j'\ne j$.
This shows that $p^{n-1},p^{n-1}r\in\mathcal{Z}_{B}$. By Lemma~\ref{lemma3}, we have $\chi_{p^{n-1},p^n}(B_{jk}-B_{0k})=0$. Then we deduce that $|B_{jk}|=|B_{0k}|$ for $j\in[0,q-1],k\in[s_{u},s_{u+p}-1]$, which contradicts to $\sum_{j=0}^{q-1}|B_{jk}|=p^{t}$ for $k\in[s_{u},s_{u+p}-1]$. Hence $q\not\in\mathcal{Z}_{A}$.
Therefore, the nonempty set of $B_{jk}$ are as follows
{\footnotesize{\[
   \begin{array}{cccccccccccccc}
     B_{00} &   &  &   &   &   &   &   &   &   \\
       &    \ddots    &  &   &   &   &   &   &   &   \\
       &      & B_{u,u}&   &   &   &   &   &   &   \\
       &      &  & B_{u+1,s_{u}} & \cdots & B_{u+1,s_{u+1}-1} &   &   &   &   \\
       &     &  & \vdots & \ddots & \vdots &   &   &   &   \\
       &      & & B_{u+j_{1},s_{u}} & \cdots & B_{u+j_{1},s_{u+1}-1} &   &   &    &   \\
       &      &  &   &   &   & \ddots &   &   &   \\
       &      & &   &   &   &   & B_{u+j_{p-1}+1,s_{u+p-1}} & \cdots & B_{u+j_{p-1}+1,s_{u+p}-1} \\
       &      &  &   &   &   &   & \vdots & \ddots & \vdots \\
      &      &  &   &   &   &   & B_{u+j_{p},s_{u+p-1}} & \cdots & B_{u+j_{p},s_{u+p}-1}
   \end{array}.
 \]}}
 Since $\cup_{i=0}^{q-1}\{i\pmod{p}:\ a^{i}\in B_{jk}\}=i$ for $i\in[0,p-1]$, $k\in[s_{u+i},s_{u+i+1}-1]$, then $p^{n-1},p^{n-1}q,p^{n-1}r\notin\mathcal{Z}_{B}$. Hence, for any $j_1\in[0,q-1],\ k_1\in[0,r-1]$ and $a^{i_0}\in A_{j_1k_1}$, we have $a^{i_0+up^{n-1}}\notin A_{jk}$ for all $u\in[1,p-1]$, $j\ne j_1$ or $k\ne k_1$.
Note that $qr\in\mathcal{Z}_{A}$. By Corollary~\ref{coro1}, $\chi_{1,p^n}(\sum_{j=0}^{q-1}\sum_{k=0}^{r-1}A_{jk})=0$. Then
 we have $\chi_{1,p^n}(A_{jk})=0$ for all $j\in[0,q-1],k\in[0,r-1]$, which contradicts to $q,r\notin\mathcal{Z}_{A}$.

{\bf{Case 2}: $s_{u}=0$.}

For this case, the nonempty sets of $B_{jk}$ are as follows
\[
   \begin{array}{ccccccc}
      B_{0,s_{u}}  &  \cdots & B_{0,s_{u+1}-1}  & \cdots  & B_{0,s_{u+p-1}}  &   & B_{0,s_{u+p}-1}  \\
       \vdots  & \vdots  & \vdots  & \cdots  & \vdots  & \vdots  & \vdots  \\
       \cdots  & \cdots  & \cdots  & \cdots  &  \cdots &  \cdots & \cdots  \\
        \vdots & \vdots & \vdots &  \vdots & \vdots  & \vdots  &  \vdots \\
      B_{q-1,s_{u}} & \cdots & B_{q-1,s_{u+1}-1}  & \cdots  & B_{q-1,s_{u+p-1}}  &   & B_{q-1,s_{u+p}-1}  \\
   \end{array},
 \]
where $0=: s_{u}\le\cdots\le s_{u+p}:=r$, and $\{i\pmod{p}:\ a^{i}\in \cup_{j=0}^{q-1}B_{jk}\}=i$ for $i\in[0,p-1]$, $k\in[s_{u+i},s_{u+i+1}-1]$.

Note that $1,qr\in\mathcal{Z}_{A}$. If $q\in\mathcal{Z}_{A}$, by Lemma~\ref{lemma3},
\begin{align*}
&\chi_{1,p^n}(A_{jk}-A_{j0})=0,\\
&\chi_{1,p^n}(\sum_{j=0}^{q-1}A_{jk})=0.
\end{align*}
Since $r\notin\mathcal{Z}_{A}$, then there exist $j,k$ such that $\chi_{1,p^n}(A_{jk})\ne0$. Hence, there exists $a^{i_0}\in A_{jk}$ but $a^{i_0+up^{n-1}}\notin A_{jk}$ for some $u\in[1,p-1]$. Moreover, $a^{i_0}\in A_{j0}$ and $a^{i_0+up^{n-1}}\in A_{j'k}$.
This shows that $p^{n-1},p^{n-1}r\in\mathcal{Z}_{B}$. By Lemma~\ref{lemma3}, we have $\chi_{p^{n-1},p^n}(B_{jk}-B_{0k})=0$. Thus $|B_{jk}|=|B_{0k}|$ for $k\in[s_{u},s_{u+p}-1]$. Therefore, $|B|=\sum_{j,k}|B_{jk}|=p^{t}qrm'$, and $|B_{jk}|=p^{t}m'$ for all $j,k$, which contradicts to $p^{n}r\notin\mathcal{Z}_{B}$. Hence, $q\not\in\mathcal{Z}_{A}$, and the nonempty set of $B_{jk}$ are as follows
\[
   \begin{array}{cccccccccccccc}
       B_{0,s_{u}} & \cdots & B_{0,s_{u+1}-1} &   &   &   &   \\
        \vdots & \ddots & \vdots &   &   &   &   \\
        B_{j_{1},s_{u}} & \cdots & B_{j_{1},s_{u+1}-1} &   &   &    &   \\
        &   &   & \ddots &   &   &   \\
        &   &   &   & B_{j_{p-1}+1,s_{u+p-1}} & \cdots & B_{j_{p-1}+1,s_{u+p}-1} \\
        &   &   &   & \vdots & \ddots & \vdots \\
   &   &   &   & B_{j_{p},s_{u+p-1}} & \cdots & B_{j_{p},s_{u+p}-1} \\
   \end{array}.
 \]
Since $\{i\pmod{p}:\ a^{i}\in \cup_{i=0}^{q-1}B_{jk}\}=i$ for $i\in[0,p-1]$, $k\in[s_{u+i},s_{u+i+1}-1]$, then $p^{n-1}q,p^{n-1}r\notin\mathcal{Z}_{B}$.
If $p^{n-1}\in\mathcal{Z}_{B}$, by Corollary~\ref{coro1}, we have
\[\chi_{p^{n-1},p^n}(B_{j_{p-1}+1,s_{u+p-1}}-B_{j_{p-1}+1,0}-B_{0,s_{u+p-1}}+B_{00})=0.\]
That is $\chi_{p^{n-1},p^n}(B_{j_{p-1}+1,s_{u+p-1}}+B_{00})=0$. Note that $\{i\pmod{p}:\ a^{i}\in B_{00}\}=0$ and  $\{i\pmod{p}:\ a^{i}\in B_{j_{p-1}+1,s_{u+p-1}}\}=p-1$. We deduce that $p=2$ and $|B_{j_{p-1}+1,s_{u+p-1}}|=|B_{00}|$. A similar discussion as above, we can get that
 all nonempty $B_{jk}$ have the same size. Hence $j_p<q-1$. By Corollary~\ref{coro1}, we have
\[\chi_{p^{n-1},p^n}(B_{q-1,r-1}-B_{q-1,0}-B_{0,r-1}+B_{00})=0.\]
This shows that $\chi_{p^{n-1},p^n}(B_{00})=0$.
This contradicts to $\{i\pmod{p}:\ a^{i}\in B_{00}\}=0$. 
Hence $p^{n-1}\notin\mathcal{Z}_{B}$. Note that $p^{n-1}q,p^{n-1}r\notin\mathcal{Z}_{B}$. Then for any $j_1\in[0,q-1],\ k_1\in[0,r-1]$, $a^{i_0}\in A_{j_1k_1}$, and $u\in[1,p-1]$, we have $a^{i_0+up^{n-1}}\notin A_{jk}$ for all $j\ne j_1$, $k\ne k_1$.
Since $qr\in\mathcal{Z}_{A}$, by Corollary~\ref{coro1},
\[\chi_{1,p^n}(\sum_{j=0}^{q-1}\sum_{k=0}^{r-1}A_{jk})=0.\]
Then we have $\chi_{1,p^n}(A_{jk})=0$ for all $j,k$, which contradicts to $q,r\notin\mathcal{Z}_{A}$.

{\bf{Case 3}: $s_{u}=r$.}

For this case, the nonempty set of $B_{jk}$ are as follows
\[
   \begin{array}{ccccccc}
     B_{00}&\cdots&B_{0,s_{0}-1} &   & & &   \\
       &   & & \ddots & & &   \\
       &   & &   & B_{u,s_{u-1}}&\cdots&B_{u,s_{u}-1}
   \end{array},
 \]
 where $s_u=r$ and $u\le q-1$.
Then $|B_{jk}|=p^{t}m$ for $j\in[0,u]$ and $k\in[s_{j-1},s_{j}-1]$. By the pigeonhole principle, we have $m=1$, $|I_{1}|=t$, and $|B_{jk}|=p^{t}$ for $j\in[0,u]$, $k\in[s_{j-1},s_{j}-1]$. Hence $|A|=|B|=p^{t}r$.

{\bf{Claim: $n-1\in I_1$.}}

Note that $1\in\mathcal{Z}_{B}$.  If $u<q-1$,
  by Corollary~\ref{coro1},
   \[\chi_{1,p^n}(B_{u,s_{u-1}}-B_{u,0}-B_{q-1,s_{u-1}}+B_{q-1,0})=0.\]
  Then we get $\chi_{1,p^n}(B_{u,s_{u-1}})=0$. Similarly, we can get that $\chi_{1,p^n}(B_{jk})=0$ for all $j\in[0,q-1]$, $k\in[0,r-1]$. Hence $n-1\in I_1$.

  If $u=q-1$ and $q\ge3$,  by Corollary~\ref{coro1},
 \[\chi_{1,p^n}(B_{u,s_{u-1}}-B_{u,0}-B_{u-1,s_{u-1}}+B_{u-1,0})=0.\]
 Then we get $\chi_{1,p^n}(B_{u,s_{u-1}})=0$. Similarly, we can get that $\chi_{1,p^n}(B_{jk})=0$ for all $j\in[0,q-1]$, $k\in[0,r-1]$.  Hence $n-1\in I_1$.

If $u=q-1$ and $q=2$,  by Corollary~\ref{coro1},
   \[\chi_{1,p^n}(B_{1,s_{0}}-B_{1,0}-B_{0,s_{0}}+B_{0,0})=0.\]
   That is $\chi_{1,p^n}(B_{1,s_{0}}+B_{0,0})=0$.
Since $p\ne2$, then there are at least two of $e,a^{p^{n-1}},a^{2p^{n-1}}$ belong to $B_{00}$ or $B_{1,s_{0}}$. Hence $n-1\in I_1$. This ends the proof of claim.

By Lemma~\ref{lemma7},
we have
\[B_{jk}=\{a^{c_{jk}+\sum_{i\in I_{1}}a_ip^i}: a_i\in[0,p-1]\}\]
 for $j\in[0,u]$ and $k\in[s_{j-1},s_{j}-1]$. Then we can compute to get that $\chi_{p^{n-1-i},p^n}(B_{jk})=0$ for any $i\in I_1$, $j\in[0,q-1]$, $k\in[0,r-1]$. Hence $J_1=\{n-1-i: i\in I_1\}.$

 {\bf{Claim : $p^{n}\notin\mathcal{Z}_{B}$.}}

If $p^{n}\in\mathcal{Z}_{B}$, then by Corollary~\ref{coro1},
 \[|B_{u,s_{u-1}}|-|B_{0,s_{u-1}}|-|B_{u,0}|+|B_{00}|=0.\]
We have $|B_{u,s_{u-1}}|+|B_{00}|=0$, which is a contradiction. Hence $p^{n}\notin\mathcal{Z}_{B}$. This ends the proof of claim.

 {\bf{Claim : $p^{i}\in\mathcal{Z}_B$ if and only if $p^{i}qr\in\mathcal{Z}_B$.}}

From above discussion, we have seen that
 if $p^{i}qr\in\mathcal{Z}_B$, then $\chi_{p^{i},p^n}(B_{jk})=0$ for any $j\in[0,q-1]$, $k\in[0,r-1]$, and so $p^{i}\in\mathcal{Z}_B$. Now we assume that $p^{i}\in\mathcal{Z}_B$.

 If $u<q-1$,
  by Corollary~\ref{coro1},
   \[\chi_{p^i,p^n}(B_{u,s_{u-1}}-B_{u,0}-B_{q-1,s_{u-1}}+B_{q-1,0})=0.\]
  Then we get $\chi_{p^i,p^n}(B_{u,s_{u-1}})=0$. Similarly, we can get that $\chi_{p^i,p^n}(B_{jk})=0$ for all $j\in[0,q-1]$, $k\in[0,r-1]$. Hence $p^{i}qr\in\mathcal{Z}_B$.

  If $u=q-1$ and $q\ge3$,  by Corollary~\ref{coro1},
 \[\chi_{p^i,p^n}(B_{u,s_{u-1}}-B_{u,0}-B_{u-1,s_{u-1}}+B_{u-1,0})=0.\]
 Then we get $\chi_{p^i,p^n}(B_{u,s_{u-1}})=0$. Similarly, we can get that $\chi_{p^i,p^n}(B_{jk})=0$ for all $j\in[0,q-1]$, $k\in[0,r-1]$. Hence $p^{i}qr\in\mathcal{Z}_B$.

   If $u=q-1$ and $q=2$,  by Corollary~\ref{coro1},
 \[\chi_{p^i,p^n}(B_{1,s_{0}}-B_{1,0}-B_{0,s_{0}}+B_{00})=0.\]
 That is
 \begin{align*}
 0=\chi_{p^i,p^n}(B_{1,s_{0}}+B_{00})&=\chi_{p^i,p^n}(a^{c_{1,s_{0}}})\chi_{p^i,p^n}(\sum_{i\in I_{1}}a_ip^i)+\chi_{p^i,p^n}(a^{c_{00}})\chi_{p^i,p^n}(\sum_{i\in I_{1}}a_ip^i)\\
 &=\chi_{p^i,p^n}(a^{c_{1,s_{0}}}+a^{c_{00}})\chi_{p^i,p^n}(\sum_{i\in I_{1}}a_ip^i).
 \end{align*}
 Since $p\ne2$, then $\chi_{p^i,p^n}(a^{c_{1,s_{0}}}+a^{c_{00}})\ne0$, and so $\chi_{p^i,p^n}(\sum_{i\in I_{1}}a_ip^i)=0$. Hence  $p^{i}qr\in\mathcal{Z}_B$.
 This ends the proof of claim.

 From above two claims, we have
 \begin{align*}
 &A_{j_1k_1}\cap A_{j_2k_2}=\emptyset\text{ for any }j_1\ne j_2, k_1\ne k_2,\\
 &(A_{j_1k_1}+A_{j_2k_2})(A_{j_1k_1}+A_{j_2k_2})^{(-1)}\subset\{a^i: i\in J_1\}.
 \end{align*}
 Then we get
 \[(\sum_{j=0}^{q-1}A_{j,k+j})(\sum_{j=0}^{q-1}A_{j,k+j})^{(-1)}\subset\{a^i: i\in J_1\}\text{ for all }k=0,1,\dots,r-1,\]
 where the second subscript of $A_{i,i+j}$ is modulo $r$. Since $|A|=p^{t}r$, by the pigeonhole principle, we have $|\sum_{j=0}^{q-1}A_{j,k+j}|=p^{t}$. By Lemma~\ref{lemma7},
 \[\sum_{j=0}^{q-1}A_{j,k+j}=\{a^{d_{k}+\sum_{i\in J_{1}}a_ip^i}: a_i\in[0,p-1]\}.\]
 A similar discussion as above, we can get
  \[A_{0,k-1}+\sum_{j=1}^{q-1}A_{j,k+j}=\{a^{d_{k}+\sum_{i\in J_{1}}a_ip^i}: a_i\in[0,p-1]\}.\]
  This shows that $A_{0k}=A_{00}$ for all $k$. Similarly as above, we can show that $A_{jk}=A_{j0}$ for all $j,k$. Hence $A=\sum_{j=0}^{q-1}A_{j0}b^j\sum_{k=0}^{r-1}c^k$, which contradicts to $A$ is not a union of $\mathbb{Z}_{r}$-cycles.
\end{proof}
By Lemma~\ref{lemma6}, we have the following corollary.
\begin{corollary}
  $|I_2|+|J_2|\ge2$.
\end{corollary}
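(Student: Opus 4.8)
The plan is to derive this directly from Lemma~\ref{lemma6} by a simple counting argument over the two-element set $\{p^{n}q,p^{n}r\}$. By definition, $I_2=\mathcal{Z}_{A}\cap\{p^{n}q,p^{n}r\}$ and $J_2=\mathcal{Z}_{B}\cap\{p^{n}q,p^{n}r\}$, so the quantity $|I_2|+|J_2|$ counts, with multiplicity across $A$ and $B$, how many of the two distinguished elements $p^{n}q$ and $p^{n}r$ lie in the relevant zero sets. The idea is to track each of these two elements separately and show that each contributes at least $1$ to the total.

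First I would treat $p^{n}r$. By Lemma~\ref{lemma6}(1) we have $p^{n}r\in\mathcal{Z}_{A}$ or $p^{n}r\in\mathcal{Z}_{B}$; in either case $p^{n}r$ is counted in $I_2$ or in $J_2$ (possibly in both), so it contributes at least $1$ to $|I_2|+|J_2|$. Next I would treat $p^{n}q$ identically: by Lemma~\ref{lemma6}(2) we have $p^{n}q\in\mathcal{Z}_{A}$ or $p^{n}q\in\mathcal{Z}_{B}$, so $p^{n}q$ likewise contributes at least $1$. Since $p^{n}q$ and $p^{n}r$ are distinct elements, these two contributions are to distinct slots within the counts $|I_2|$ and $|J_2|$, and summing gives
\[
|I_2|+|J_2|\ge 1+1=2.
\]

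There is no real obstacle here: all of the substantive work has already been carried out in establishing Lemma~\ref{lemma6}, and the present statement is an immediate bookkeeping consequence. The only point requiring a moment's care is to note that the two membership alternatives provided by the two parts of Lemma~\ref{lemma6} concern \emph{distinct} elements $p^{n}q$ and $p^{n}r$, so their guaranteed contributions do not overlap and genuinely add up to $2$ rather than collapsing to a single forced membership.
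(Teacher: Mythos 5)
Your argument is correct and is exactly the intended deduction: the paper derives this corollary directly from Lemma~\ref{lemma6} with no further proof, and your counting of the two distinct elements $p^{n}q$ and $p^{n}r$ across $I_2$ and $J_2$ is precisely the implicit bookkeeping. Nothing to add.
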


 Now we divide our discussion into 2 cases according to the size of $I_2,J_2$.

\subsection{$|I_{2}|=2$ or $|J_2|=2$}\label{subsec1}
Assume that $|I_1|=t$, then we have $p^{t}qr\mid|A|$. For $y\in[0,q-1],z\in[0,r-1]$, denote
\[B_{yz}=\{x: a^{x}b^yc^z\in B\}.\]
 If $|A|=|B|>p^{t}qr$, then there exist $y,z$ such that $|B_{yz}|>p^{t}$. By the pigeonhole principle, we have $|I_{1}|\ge t+1$, which is a contradiction.

 Now we assume that $|A|=p^{t}qr$, then $|J_{1}|\le t$. For $y\in[0,q-1],z\in[0,r-1]$, denote
 \[A_{yz}=\{x: a^{x}b^yc^z\in A\}.\]
By the pigeonhole principle again, we have $|A_{yz}|=p^{t}$ for any $y\in[0,q-1],z\in[0,r-1]$. Then $|J_{1}|=t$. Denote
\[T:=\{a^{\sum_{i\in[0,n-1]\backslash J_{1}}x_{i}p^{i}}: x_{i}\in[0,p-1]\}.\]
If $(AA^{(-1)})\cap(TT^{(-1)})\ne\{e\}$, then there exists $i\in[0,n-1]\backslash J_{1}$, such that $p^{i}qr\in\mathcal{Z}_{B}$, which is a contradiction. Hence $(AA^{(-1)})\cap(TT^{(-1)})=\{e\}$. By Lemma~\ref{lem3p2}, $(A,T)$ forms a tiling pair in $\mathbb{Z}_{p^{n}qr}$, which contradicts to $A$ is not a tiling set.

\subsection{$|I_{2}|=|J_{2}|=1$}
By Lemma~\ref{lemma5},
$1\in\mathcal{Z}_{A}$ and $1\in\mathcal{Z}_{B}$.
By Lemma~\ref{lemma6}, WLOG, we assume that $p^{n}q\in\mathcal{Z}_{A}$, $p^{n}r\not\in\mathcal{Z}_{A}$, $p^{n}r\in\mathcal{Z}_{B}$ and $p^{n}q\not\in\mathcal{Z}_{B}$.
Then $qr\mid|A|$. Assume that $|A|=p^{t}qrm$. For $y\in[0,q-1],z\in[0,r-1]$, denote
 \[A_{yz}=\{x: a^xb^yc^z\in A\}.\]
A similar discussion as Subsection~\ref{subsec1}, we can get that $m=1$, $|A|=p^tqr$ and $|A_{yz}|=p^t$ for $y\in[0,q-1],z\in[0,r-1]$. This shows that $p^{n}q,p^nr\in\mathcal{Z}_{A}$, which is a contradiction.

\end{document}